\newcommand{\R}{\mathbb R}
\newcommand{\co}{\colon}
\renewcommand{\phi}{\varphi}
\newcommand\ka{\kappa}
\newcommand\ga{\gamma}
\newcommand\ep{\varepsilon}
\newcommand\pd{\partial}
\def\<{\langle}
\def\>{\rangle}
\newcommand\Gr{\operatorname{Gr}}
\newcommand\id{\operatorname{id}}
\newcommand\Lin{\operatorname{Lin}}
\newcommand\U{\mathcal U}
\newcommand{\be}{\begin{equation}}
\newcommand{\ee}{\end{equation}}
\newtheorem{theorem}{Theorem}
\newtheorem{lemma}{Lemma}[section]
\newtheorem{prop}[lemma]{Proposition}
\newtheorem{cor}[lemma]{Corollary}
\newtheorem*{fact}{Fact}
\newtheorem*{tsecprime}{Theorem \ref{t:sections}'}
\theoremstyle{definition}
\newtheorem{definition}[lemma]{Definition}
\newtheorem{question}[lemma]{Question}
\newtheorem{example}[lemma]{Example}
\theoremstyle{remark}
\newtheorem{remark}[lemma]{Remark}
\numberwithin{equation}{section}
\begin{document}

\title[Monochromatic Finsler surfaces]{Monochromatic Finsler surfaces and a local ellipsoid characterization}

\author{Sergei Ivanov}
\address{St.~Petersburg Department of Steklov Mathematical Institute of
Russian Academy of Sciences,
Fontanka 27, St.Petersburg 191023, Russia}
\address{Saint Petersburg State University, 
7/9 Universitetskaya emb., St. Petersburg 199034, Russia
}
\email{svivanov@pdmi.ras.ru}

\subjclass[2010]{53B40, 53B25, 52A21}

\keywords{Finsler surface, Banach-Minkowski space, 
second fundamental form, ellipsoid characterization}

\thanks{Research is supported by the Russian Science Foundation grant 16-11-10039}

\begin{abstract}
We prove the following localized version of a classical ellipsoid characterization:
Let $B\subset\R^3$ be convex body with a smooth strictly convex boundary and 
0 in the interior, and suppose that there is an open set of planes
through 0 such that all sections of $B$ by these planes are linearly equivalent.
Then all these sections are ellipses and the corresponding part of $B$
is a part of an ellipsoid.

We apply this to differential geometry of Finsler surfaces
in normed spaces and show that
in certain cases the intrinsic metric of a surface
imposes restrictions on its extrinsic geometry
similar to implications of Gauss' Theorema Egregium.
As a corollary we construct 2-dimensional Finsler metrics
that do not admit local isometric embeddings
to dimension~3.
\end{abstract}

\maketitle

\section{Introduction}

Although this paper is motivated by Finsler geometry,
one of the main results is the following
theorem about convex sets.

\begin{theorem}\label{t:sections}
Let $B\subset\R^n$, $n\ge 3$, be a convex body with a smooth strictly convex boundary
and 0 in the interior.
Let $H\subset\R^n$ be a two-dimensional plane through~0
and suppose that there is a neighborhood of $H$
in the Grassmannian $\Gr_2(\R^n)$
such that for every plane $H'$ from this neighborhood
the cross-section $H'\cap B$ is linearly equivalent to $H\cap B$.

Then $H\cap B$ is an ellipse centered at~0
and furthermore $B$ coincides with an ellipsoid
in a neighborhood of $H$.
\end{theorem}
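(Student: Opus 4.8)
The plan is to reduce to $n=3$ and then analyze the one‑parameter deformations of the cross‑section $H\cap B$ produced by rotating the cutting plane: each such deformation must be infinitesimally \emph{linear}, and feeding this information back into the way cross‑sections sit inside $B$ forces the section to be an ellipse and $\pd B$ to coincide with an ellipsoid. To pass to $n=3$, restrict the hypothesis to the pencil of $2$‑planes contained in a fixed $3$‑dimensional subspace $V\supset H$; this gives the same hypothesis for $B\cap V\subset V$, and once the theorem is known in dimension $3$ the ellipsoidal caps obtained in the various $V\supset H$ share the common section $H\cap B$ and can be fitted together into one ellipsoidal cap of $B$ near $H$. So assume $n=3$, choose linear coordinates with $H=\{x_3=0\}$, $x=(x',x_3)$, $x'\in\R^2$, identify a neighborhood of $H$ in $\Gr_2(\R^3)$ with a neighborhood of $0$ in $(\R^2)^*$ via $\xi\mapsto H_\xi:=\{x_3=\<\xi,x'\>\}$, and let $K_\xi\subset\R^2$ be the image of $H_\xi\cap B$ under $x\mapsto x'$. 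By strict convexity and smoothness of $\pd B$ and the implicit function theorem, $\xi\mapsto K_\xi$ is a smooth family of smooth strictly convex bodies with $0$ in the interior, and $K_0=H\cap B$.

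Now use the hypothesis: there are $A_\xi\in GL_2(\R)$ with $A_\xi K_0=K_\xi$, normalized by $A_0=\id$. If $K_0$ is not an ellipse its linear automorphism group is finite (a positive‑dimensional compact subgroup of $GL_2(\R)$ is conjugate to one containing $SO(2)$, and an $SO(2)$‑symmetric planar body is a disc), so $\xi\mapsto A_\xi$ may be taken smooth near $0$. Fix $\eta\in(\R^2)^*$, put $M=M(\eta):=\tfrac{d}{dt}\big|_{t=0}A_{t\eta}$ (a $2\times2$ matrix, linear in $\eta$), and compute the normal velocity of the moving curve $\pd K_{t\eta}$ at $t=0$ in two ways. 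From $K_{t\eta}=A_{t\eta}K_0$ it equals $\<Mp,\nu(p)\>$ at $p\in\pd K_0$, with $\nu$ the outer unit normal; from $\pd K_{t\eta}=\{x':F(x',\<t\eta,x'\>)=0\}$ for a defining function $F$ of $B$, differentiation gives that it equals $g(p)\<\eta,p\>$, where $g:=-\pd_{x_3}F/|\nabla_{x'}F|$ is a fixed smooth function on $\pd K_0$ determined by the $1$‑jet of $\pd B$ along $H$. Hence
\be\label{eq:plan}
\<M(\eta)p,\nu(p)\>=g(p)\,\<\eta,p\>\qquad\text{for all }p\in\pd K_0,\ \eta\in(\R^2)^*.
\ee
Written in support‑function coordinates on $\pd K_0$, \eqref{eq:plan} is an overdetermined linear relation for the support function of $\pd K_0$, with the matrices $M(\eta)$ and the ``tilt'' $g$ as the only unknowns. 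Re‑basing the construction at a nearby plane $H_\xi$ (the hypothesis applies there too) yields the same relation with $\pd K_0,g$ replaced by their counterparts for $K_\xi$, so the resulting family of relations samples $F$ and its derivatives throughout a neighborhood of $H$, not merely along $H$.

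I expect the main obstacle to be the rigidity step: showing that \eqref{eq:plan}, taken for all $\eta$ and at all nearby base points, (i) has no solution with $\pd K_0$ non‑elliptic — so the standing assumption collapses and $H\cap B$ is an ellipse — and (ii) forces $g$ together with its re‑based versions, i.e.\ the second fundamental form of $\pd B$ along $H\cap\pd B$ and its transverse derivatives, to be exactly those of an ellipsoid. Granting (i)--(ii), normalize $K_0$ to the unit disc (in the elliptic case $A_\xi$ is ambiguous up to $O(2)$, but its positive‑symmetric representative is smooth and \eqref{eq:plan} persists); the prescribed second‑order data along the circle $H\cap\pd B$ then determines $\pd B$ near that circle by uniqueness for the associated ordinary differential equation, producing an ellipsoidal cap. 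The delicate part of (i)--(ii) is to extract from the purely first‑order identity \eqref{eq:plan} enough to kill every non‑elliptic competitor; this is exactly where the second fundamental form of $\pd B$ enters essentially and where the rigidity announced in the introduction (in the spirit of Gauss' Theorema Egregium) is used. Finally, the finitely many points of $H\cap\pd B$ at which the projection $x\mapsto x'$ degenerates (where $T_p\pd B$ is vertical and $\nabla_{x'}F=0$) are dealt with by continuity.
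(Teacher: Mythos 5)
Your setup reproduces the opening moves of the paper's own argument: the reduction to $n=3$ via $3$-dimensional subspaces containing $H$, the parametrization of nearby planes by a linear functional, the use of discreteness of the linear automorphism group of a non-elliptic section to obtain a smooth family $A_\xi$ (this is Lemma \ref{l:smooth}), and a first-order identity obtained by differentiating $A_{t\eta}K_0=K_{t\eta}$ at $t=0$ (the analogue of \eqref{e:diffLa} and \eqref{e:dPhiRhv}). But at exactly the point where the content of the theorem begins, you write ``I expect the main obstacle to be the rigidity step'' and then ``granting (i)--(ii)''. That rigidity step \emph{is} the theorem, and it is not a routine overdetermined-system computation in support-function coordinates. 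The paper needs two nontrivial ideas there. First (Lemmas \ref{l:Rhv} and \ref{l:Rtau}): restricting the identity to $p\in\ker\eta$ shows that the quadratic vector field $v\mapsto R(J(v))(v)$ is tangent to the strictly convex curve $\gamma=H\cap\Sigma$; an orientation/homogeneity argument forces it to vanish on a line, a linear factor is split off, and a linear vector field with bounded trajectories is either zero or has elliptical orbits --- the non-Euclidean assumption excludes the latter, and what survives is a single vector $\tau_H$ transverse to $H$ and tangent to $\partial B$ along all of $H\cap\partial B$. Second (Proposition \ref{p:kakutani}): converting the existence of such $\tau_H$ for an \emph{open set} of planes into ``every section is an ellipse'' is a localized Blaschke--Kakutani theorem, proved by applying the local fundamental theorem of projective geometry to the map sending a line to the direction of the tangent plane of $\Sigma$ along it. Neither idea appears, even in outline, in the proposal, and your identity alone (for a single base plane, or even re-based at nearby planes without a mechanism to combine them) does not yield the conclusion.

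The final step is also not right as sketched. Prescribing second-order data of $\partial B$ along the single curve $H\cap\partial B$ cannot determine $\partial B$ in a two-dimensional neighborhood of that curve (a $2$-jet along a curve does not determine a function near the curve), so ``uniqueness for the associated ODE'' does not produce the ellipsoidal cap; moreover your identity is only first order in $\partial B$ transversally to $H$. What the paper does instead is apply the section result to every plane in the neighborhood, concluding that $\Phi|_{H'}$ is Euclidean for all such $H'$, and then prove a separate gluing statement (Lemma \ref{l:quadratic}): the squares of these Euclidean norms all extend to a single positive quadratic form on $V$, via a growth argument showing that a certain difference of squared norms is affine on each line of a suitable family. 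Some such gluing is also needed to make your $n>3$ patching precise. As written, the proposal is a correct setup plus a statement of the remaining problem, not a proof.
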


Here by \textit{linear equivalence} of two cross-sections
we mean the existence of a linear bijection between their planes
sending one cross-section to the other.
The word ``smooth'' in this paper always means $C^\infty$,
and strict convexity means quadratic strict convexity,
i.e.\ positivity of appropriate second derivatives.
By $\Gr_k(V)$ we denote
the Grassmannian of $k$-dimensional linear subspaces of
a vector space~$V$.

Theorem \ref{t:sections} is a ``local version''
of the following classical ellipsoid characterization
due to Auerbach, Mazur and Ulam~\cite{AMU}:
{\it If all planar cross-sections through 0 of a convex body
are linearly equivalent, then the body is a centered ellipsoid.}
(Note that in this contexts local statements are stronger than global ones.)

\begin{remark}
The following more general question goes back to Banach \cite{Banach32} and
remains only partially solved: For given $k$ and $n$ ($k<n$),
are Euclidean spaces the only $n$-dimensional Banach spaces
with the property that all $k$-dimensional linear subspaces are isometric?
The above mentioned Auerbach-Mazur-Ulam's theorem
answers this question for $k=2$.
Gromov \cite{Gro67} proved that the answer is affirmative
if $k$ is even or $n\ge k+2$.
The case when $k$ is odd and $n=k+1$ remains open.
See also \cite[Note~7.2]{Gardner} for discussion and related results.

The proofs in \cite{AMU} and \cite{Gro67} are based on global
obstructions arising from algebraic topology of Grassmannians.
These methods do not work in the local version of the problem.
\end{remark}

\begin{remark}
It is plausible that the smoothness assumption in 
Theorem \ref{t:sections} can be relaxed.
However (some form of) strict convexity is necessary.
Indeed, pick any convex body $B_0\subset\R^2$  and let $B\subset\R^2\times\R$ 
be a convex body which coincides with $B_0\times\R$
in a neighborhood of the plane $H=\R^2\times\{0\}$.
Then all cross-sections of $B$ by planes sufficiently close to~$H$ are linearly equivalent to~$B_0$
but $B_0$ is not necessarily an ellipse.
\end{remark}

Every convex body with 0 in the interior is the unit ball of 
a norm.
We use the word ``norm'' in a slightly generalized meaning,
namely a norm is not required to be symmetric.
Smooth strictly convex bodies correspond to especially nice norms
called Banach-Minkowski ones.
By definition, a \textit{Banach-Minkowski norm} on a vector space $V$ 
is a (possibly non-symmetric) norm $\Phi\co V\to\R_+$
which is smooth outside 0
and such that the function $\Phi^2$ is (quadratically) strictly convex.
A \textit{Banach-Minkowski space} is a finite-dimensional
vector space equipped with a Banach-Minkowski norm.
These spaces are often referred to as Minkowski spaces but
we use the longer term to avoid confusion with special relativity.

A norm is called \textit{Euclidean} if it is
associated with an inner product.
The following is a slightly more detailed reformulation 
of Theorem \ref{t:sections} in terms of norms.

\begin{tsecprime}
Let $V=(V^n,\Phi)$ be a Banach-Minkowski space, $n\ge 3$, and
assume that $\U\subset\Gr_2(V)$ is a connected open set such that
for all $H,H'\in\U$ the normed planes $(H,\Phi|_H)$ and $(H',\Phi|_{H'})$ are isometric.

Then $\Phi|_H$ is a Euclidean norm for every $H\in\U$.
Moreover there exists a Euclidean norm on $V$ whose restriction
to every plane from $\U$ coincides with the restriction of $\Phi$.
\end{tsecprime}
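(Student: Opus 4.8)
The plan is to reduce everything to the case $n=3$ and then to extract from the isometry hypothesis a rigidity statement about the third-order affine geometry of $\pd B$ along $H\cap\pd B$.

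\emph{Reduction.} Assume first that we know the claim that $\Phi|_H$ is Euclidean. After a global linear change of coordinates in $V$ we may take $H\cap B$ to be a round disc; then $H'\cap B$ is an ellipse for every $H'\in\U$ (being linearly equivalent to a disc), and these ellipses form a smooth family which is pairwise compatible, since two of them agree along the common line $H'\cap H''$. Such a family is cut out by a single quadric, which near $H$ bounds a convex body and is therefore an ellipsoid; and since the pointed convex bodies $(H'\cap B,0)$ are pairwise linearly equivalent over an open family of planes, whereas the central sections of a non-centered ellipsoid are not, this ellipsoid must be centered at $0$. A connectedness argument propagates the identification over all of $\U$, giving the Euclidean norm on $V$ claimed in the theorem. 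It remains to prove that $\Phi|_H$ is Euclidean, and for this it suffices to treat $n=3$: given a $3$-dimensional $W\supset H$, a small enough neighborhood of $H$ in $\Gr_2(W)$ maps into $\U$, so $B\cap W$ satisfies the hypotheses in dimension $3$, and the $n=3$ conclusion yields that $\Phi|_H=(\Phi|_W)|_H$ is Euclidean.

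\emph{The case $n=3$.} Suppose, towards a contradiction, that $\psi:=\Phi|_H$ is not Euclidean, i.e.\ its indicatrix $I:=\{\psi=1\}$ is not a centered ellipse. Every one-parameter subgroup of $GL_2(\R)$ that preserves a compact convex curve is conjugate to $SO(2)$ and so preserves only centered ellipses; hence the group $G$ of linear automorphisms of $I$ contains no one-parameter subgroup, and being a closed subgroup of $GL_2(\R)$ preserving a bounded body it is compact, hence finite. Therefore the isometries $\iota_{H'}\co(\R^2,\psi)\to(H',\Phi|_{H'})$, which are linear isomorphisms unique up to the finite group $G$, can be chosen to depend smoothly on $H'$ over a simply connected neighborhood $\U_0$ of $H$. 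This gives a smooth map $F\co\U_0\times\R^2\to V$ such that $\Phi(F(H',u))=\psi(u)$ and each $F(H',\cdot)$ is a linear isomorphism onto $H'$. Differentiating the identity $\Phi\circ F=\psi$ in the $\U_0$-directions shows that for each $u\in I$ the differential of $H'\mapsto F(H',u)$ takes values in $T_{F(H',u)}\pd B$; that is, the moving indicatrix remains on $\pd B$.

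\emph{The local analysis, and the main obstacle.} Fix $q\in H\cap\pd B$ and affine coordinates in which $q=0$, $T_q\pd B=\{x_3=0\}$ and
\[
\pd B=\Bigl\{x_3=\tfrac12(x_1^2+x_2^2)+C(x_1,x_2)+O(|x|^4)\Bigr\},
\]
where the cubic form $C$ is the Fubini--Pick invariant of $\pd B$ at $q$; the goal is to show $C=0$. Near $q$ the planes of $\U_0$ form a $2$-parameter family of affine planes, and their sections of $\pd B$ are, to the order that matters, circles perturbed by a multiple of $C$. The affine curvature of such a perturbed circle differs from that of a circle by a first-order term linear in $C$ and its derivatives, while the isometry hypothesis forces all of these curvature functions to be reparametrizations of one fixed function --- a constraint confined to a finite-dimensional family. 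Differentiating this constraint as the section plane runs over the $2$-parameter family forces $C=0$ at $q$, and repeating the argument at every nearby boundary point gives $C\equiv0$ in a neighborhood of $H\cap\pd B$ in $\pd B$. A hypersurface with vanishing Fubini--Pick form is a piece of a quadric, hence here a piece of an ellipsoid $\hat B$; then $H'\cap B=H'\cap\hat B$ for $H'$ near $H$, and, as in the reduction, the pairwise linear equivalence of these sections forces $\hat B$ to be centered at $0$, so $H\cap B$ is a centered ellipse and $\psi$ is Euclidean --- a contradiction. The crux of the whole proof is this last step: converting ``all section curves are affinely equivalent'' into a usable differential identity while controlling the reparametrization ambiguity, and squeezing enough out of the $2$-parameter family of section planes to conclude $C\equiv0$; the reductions above and the smooth choice of the $\iota_{H'}$ are comparatively routine.
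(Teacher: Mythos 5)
Your proposal has a genuine gap at its center, and you say so yourself: the step that converts the affine equivalence of all nearby sections into the vanishing of the Fubini--Pick cubic $C$ at $q$ is only a plan, not an argument. The difficulty you flag --- controlling the reparametrization ambiguity when all the affine curvature functions are required to be reparametrizations of one fixed function, and extracting a pointwise identity on $C$ from a $2$-parameter family of constraints --- is precisely the hard part, and nothing in the proposal shows that the first-order perturbation of the curvature caused by $C$ cannot be absorbed into that reparametrization freedom. The paper's proof of this core step takes a different and concretely realized route: after constructing the smooth family of isometries $L_\alpha$ (your map $F$; this part of your proposal does match Lemma \ref{l:smooth}), it differentiates both the incidence relation $\pi_\alpha(L_\alpha v)=0$ and the isometry relation $\Phi(L_\alpha v)=\phi_0(v)$, and shows via a quadratic-vector-field argument (Lemma \ref{l:Rhv}: a quadratic field tangent to the level sets of a non-Euclidean norm must vanish) that the derivative $R$ has the rigid form $R(h)(v)=h(v)\tau$ for a single vector $\tau\notin H$ tangent to $\Sigma$ along all of $H\cap\Sigma$ (Lemma \ref{l:Rtau}). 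It then proves a localized Blaschke--Kakutani theorem (Proposition \ref{p:kakutani}) using the local fundamental theorem of projective geometry to conclude that the sections are ellipses. None of this machinery, or a substitute for it, appears in your sketch of the crux.

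There is a secondary gap in your reduction step. The assertion that a pairwise-compatible family of centered ellipses over an open set of planes ``is cut out by a single quadric'' is exactly the content of Lemma \ref{l:quadratic}, which is not routine: the paper needs either a common line shared by three planes (Case 1) or the growth argument with $F(v)=\Phi^2(v)-\Phi^2(v-v_0)$, whose restriction to each section line is shown to be affine because the triangle inequality bounds its growth, before a global quadratic form can be assembled. (The subsequent centering discussion is unnecessary: once $\Phi|_{H'}$ is Euclidean, $H'\cap B$ is automatically a centered ellipse because it is the unit ball of a norm.) The reduction from general $n$ to $n=3$ for the first claim is correct and agrees with the paper, but as it stands the proposal establishes neither of the two substantive claims of the theorem.
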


\begin{remark}
The assumption that $\U$ is connected
is not necessary if ${n=3}$.
In higher dimensions one can construct examples where $\Phi$ coincides
with one Euclidean norm near one plane and
with another Euclidean norm near some other plane.
The details of this construction are left to the reader.
\end{remark}

If $B$ is the unit ball of a norm $\Phi$
and two planar cross-sections $H\cap B$ and $H'\cap B$ are linearly
equivalent, then the normed planes $(H,\Phi|_H)$ and $(H',\Phi|_{H'})$ are isometric
(and vice versa).
Thus Theorem \ref{t:sections}' implies Theorem \ref{t:sections}.

Theorem \ref{t:sections}' easily follows from its 3-dimensional case
(see Section \ref{sec:riem}).
Finslerian results discussed below work only in 3-dimensional spaces.
For these reasons in what follows we mainly restrict our attention to dimension~3.

\subsection*{Finsler surfaces}
For a smooth surface $M^2\subset\R^3$ and a point $p\in M$,
the celebrated Gauss' Theorema Egregium implies that
local intrinsic geometry of $M$ near~$p$
determines the type of the second-order extrinsic geometry at $p$.
Namely the sign of the Riemannian curvature of $M$ at $p$ determines
whether the second fundamental form of $M$ at~$p$
is definite, semi-definite, or degenerate.
(We call a quadratic form \textit{definite} if it either positive definite
or negative definite.)

The distinction between the three types of the second fundamental form is affine invariant.
Hence one may ask whether there are similar relations
between intrinsic and extrinsic geometry for Finsler surfaces in normed 3-spaces.
More precisely, we have the following question.

\begin{question}[{cf.~\cite[Remark 1.6]{BI11}}]\label{main-question}
Let $M$ be a two-dimensional Finsler manifold and
$f_i\co M\to V_i$, $i=1,2$, smooth isometric embeddings
to 3-dimensional Banach-Minkowski spaces.
For $p\in M$, is it always true that the 
second fundamental forms of $f_i$ at $p$
are of the same type: definite, semi-definite, or degenerate?
\end{question}

\begin{remark}\label{rem:highdim}
If the dimension of the ambient space $V$ is greater than 3, then the answer to any sensible variant
of Question \ref{main-question} is negative. 
Indeed, as shown in \cite{BI11}, 
locally any 2-dimensional Finsler manifold is isometric 
to a strictly saddle surface in $\R^4$ equipped with some Banach-Minkowski norm.
In contrast, in the Euclidean case this is possible
only for negatively curved metrics.
\end{remark}

In this paper we show that at least for some Finsler metrics
the answer to Question \ref{main-question} is affirmative.
Namely this is the case if the metric is second-order flat
at the point in question or monochromatic
(see below for definitions).
In both cases the second fundamental form 
is necessarily degenerate, unless in the monochromatic case
the metric is actually Riemannian.

Recall that a \textit{Finsler manifold} is a smooth manifold $M$ equipped with 
a Finsler metric, that is a continuous function $\phi\co TM\to\R_+$ such that
$\phi$ is smooth on $TM\setminus 0$ and $\phi|_{T_xM}$
is a Banach-Minkowski norm for every $x\in M$.
Every Banach-Minkowski space is naturally a Finsler manifold.
An isometric immersion is a norm-preserving immersion
from one Finsler manifold to another.
Since our set-up is local, we consider
isometric embeddings rather than immersions.

The standard definition of the second fundamental form 
requires an inner product, which we do not have in our set-up.
We use the following affine invariant version of the definition.
Let $M$ be a smooth manifold, $V$ a vector space,
and ${f\co M\to V}$ a smooth immersion.
Then the second fundamental form of $f$ at $p\in M$ 
is a symmetric bilinear form on $T_pM$ with
values in the quotient vector space $V/\operatorname{Im}d_pf$
where $d_pf\co T_pM\to V$ is the differential of $f$ at~$p$.
This form is defined as the Hessian of $\pi\circ f$ at $p$
where $\pi\co V\to V/\operatorname{Im}d_pf$ is the quotient projection.
The Hessian is well-defined since $p$ is a critical point of $\pi\circ f$.
If the quotient space is one-dimensional, one may regard the second fundamental form
as a real-valued form defined up to a multiplication by a constant.

Following S.-S.~Chern and D.~Bao (see \cite[\S3.3]{Bao}),
we call a Finsler manifold $M=(M,\phi)$ \textit{monochromatic}
if all its tangent spaces are isometric as normed vector spaces.
Note that all Riemannian manifolds are monochromatic.

\begin{theorem}\label{t:mono}
Let $M^2$ be a monochromatic non-Riemannian Finsler manifold,
$V^3$ a Banach-Minkowski space and $f\co M\to V$ a smooth isometric embedding.
Then the second fundamental form of $f$ is degenerate at every point.
\end{theorem}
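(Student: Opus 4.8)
The plan is to use the Gauss map of $f$ to produce an open family of pairwise linearly equivalent plane sections of the unit ball of $V$, and then invoke Theorem~\ref{t:sections}$'$. Suppose, towards a contradiction, that the second fundamental form $h$ of $f$ is non-degenerate at some point $p\in M$. Since $f$ is an immersion, every differential $d_xf\co T_xM\to V$ is injective; set $H_x:=d_xf(T_xM)\in\Gr_2(V)$, the tangent plane of $f(M)$ at $x$, and let $G\co M\to\Gr_2(V)$, $G(x)=H_x$, be the corresponding (smooth) Gauss map. Because $f$ is isometric, $d_xf$ is a linear isometry of $(T_xM,\phi|_{T_xM})$ onto $(H_x,\Phi|_{H_x})$; hence, $M$ being monochromatic, all the normed planes $(H_x,\Phi|_{H_x})$, $x\in M$, are mutually isometric.

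The key step is to identify the differential of $G$ with the second fundamental form. Using the canonical isomorphism $T_{H}\Gr_2(V)\cong\operatorname{Hom}(H,V/H)$ and the isomorphism $d_pf\co T_pM\to H_p$, I claim that
\[
  d_pG(w)\bigl(d_pf(v)\bigr)=h_p(w,v)\in V/H_p\qquad(w,v\in T_pM).
\]
This is a direct computation in the spirit of the classical Weingarten identity: to evaluate $d_pG(w)$ on $d_pf(v)\in H_p$ one differentiates at $t=0$ a lift $t\mapsto d_{x(t)}f(V(t))\in H_{x(t)}$, where $x(\cdot)$ is a curve with $x(0)=p$, $x'(0)=w$ and $V(\cdot)$ a vector field along it with $V(0)=v$, and projects to $V/H_p$; the first-order term $d_pf(V'(0))$ lies in $H_p$ and is killed, leaving the component in $V/H_p$ of the second derivative of $f$ at $p$ in the directions $w,v$, which is by definition $h_p(w,v)$. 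Since $\dim T_pM=\dim\operatorname{Hom}(H_p,V/H_p)=2$, the linear map $d_pG$ is an isomorphism if and only if $w\mapsto h_p(w,\cdot)$ is injective, i.e.\ if and only if $h_p$ is non-degenerate. By assumption it is, so by the inverse function theorem $G$ carries a neighborhood of $p$ diffeomorphically onto an open connected neighborhood $\U$ of $H_p$ in $\Gr_2(V)$.

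By the first paragraph, all sections $(H,\Phi|_H)$ with $H\in\U$ are isometric, so Theorem~\ref{t:sections}$'$ (only its $3$-dimensional case is needed, where connectedness is not even required) implies that $\Phi|_{H_p}$ is a Euclidean norm. Transporting through the isometry $d_pf$, the norm $\phi|_{T_pM}$ is Euclidean; as $M$ is monochromatic, every tangent space of $M$ is then isometric to a Euclidean normed space, hence Euclidean, so $\phi^2$ is a smooth field of positive-definite quadratic forms and $M$ is Riemannian --- contradicting the hypothesis. Thus $h$ is degenerate at every point. The only substantial step is the identification of $d_pG$ with $h_p$ in the present affine (non-Euclidean) setting; I expect the main care there to go into pinning down the canonical identifications of $T_{H_p}\Gr_2(V)$ with $\operatorname{Hom}(H_p,V/H_p)$ and of the value space $V/H_p$, after which the conclusion is immediate from Theorem~\ref{t:sections}$'$.
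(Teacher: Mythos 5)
Your proposal is correct and follows essentially the same route as the paper: the Gauss map $G(x)=\operatorname{Im}d_xf$, the identification of $d_pG$ with the second fundamental form so that non-degeneracy makes $G$ locally surjective onto a neighborhood in $\Gr_2(V)$, monochromaticity giving isometric sections over that neighborhood, and then the $3$-dimensional section result (Proposition~\ref{p:sections}, the first part of Theorem~\ref{t:sections}') forcing $\phi|_{T_pM}$ to be Euclidean and hence $M$ Riemannian. The extra care you devote to the Weingarten-type identification of $d_pG$ with $h_p$ is a detail the paper states without proof, but the argument is the same.
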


In a sense, Theorem \ref{t:mono} is a reformulation of Theorem~\ref{t:sections},
see Remark \ref{rem:converse}.

The property that the second fundamental form is degenerate at every point
is a strong restriction.
We use implications of this property and Theorem \ref{t:mono}
to construct examples of Finsler metrics in $\R^2$
that do not admit local isometric embeddings to 3-dimensional
Banach-Minkowski spaces. See Example \ref{ex:nonembed} and Proposition \ref{p:nonembed}.

As already mentioned in Remark \ref{rem:highdim},
all Finsler surfaces admits local isometric embeddings to dimension~4.
Thus $n=4$
is the minimal $n$ such that all Finsler surfaces admit
local isometric embeddings to dimension~$n$.
Note that there is no such universal dimension for \emph{global} isometric
embeddings and moreover non-compact Finsler manifolds generally
do not admit isometric embeddings to Banach-Minkowski spaces,
see \cite{BI93} and~\cite{Shen}.

\medskip


\begin{definition}\label{d:soflat}
Recall that a Finsler metric is called \textit{flat}
if it is locally isometric to a Banach-Minkowski space.

Let $M=(M,\phi)$ be a Finsler manifold.
We say that the metric $\phi$ is \textit{second-order flat}
at a point $p\in M$ if there exists a flat Finsler metric $\phi_0$
defined in a neighborhood of $p$ such that for $x\in M$ near $p$ and 
$v\in T_xM\setminus\{0\}$ one has
\be\label{e:2oflat}
  \frac{\phi(x,v)}{\phi_0(x,v)} = 1 + o(|x-p|^2), \quad x\to p,
\ee
where $|x-p|$ is the distance from $x$ to $p$ 
in an arbitrary local coordinate system.
\end{definition}

If $\phi$ is Riemannian, i.e.\ $\phi=\sqrt g$ where $g$ is a Riemannian metric tensor,
then \eqref{e:2oflat} is equivalent to the existence of local coordinates
in which the second derivatives of~$g$ vanish at~$p$.
In dimension~2 this is equivalent to $K(p)=0$
where $K$ is the curvature of the metric.
By Gauss' Theorem this implies
that every smooth isometric embedding of $M$ to $\R^3$
has a degenerate second fundamental form at~$p$.
In the next theorem we generalize this implication to Finsler surfaces.

\begin{theorem}\label{t:flat}
Let $M=(M^2,\phi)$ be a Finsler manifold whose metric is second-order flat
at a point $p\in M$. 
Let $V^3$ be a Banach-Minkowski space and $f\co M\to V$ a smooth isometric embedding.
Then the second fundamental form of $f$ at $p$ is degenerate.
\end{theorem}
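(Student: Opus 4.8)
The plan is to reduce the statement to a purely local, second-order computation and then invoke the rigidity supplied by Theorem \ref{t:sections} (in the form of its converse aspect, as hinted in Remark \ref{rem:converse}). First I would pass to local coordinates centered at $p$ and choose the flat comparison metric $\phi_0$ from Definition \ref{d:soflat}; since $\phi_0$ is flat, after a further affine change of coordinates we may assume $\phi_0$ is (the restriction of) a fixed Banach-Minkowski norm $\Phi_0$ on $\R^2$, constant in $x$. The hypothesis \eqref{e:2oflat} then says that the tangent norms $\phi(x,\cdot)$ agree with $\Phi_0$ up to an error $o(|x-p|^2)$. The key observation is that this forces the family of tangent norms to be ``monochromatic to second order'' at $p$: any two tangent spaces $T_xM$, $T_{x'}M$ with $x,x'$ near $p$ are isometric modulo an error that is $o(|x-p|^2)+o(|x'-p|^2)$, and in particular the $2$-jet at $p$ of the function $x\mapsto[\phi(x,\cdot)]$ (valued in the space of norms modulo linear isomorphism) is that of a constant.

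Next I would transport this to the image surface $S=f(M)\subset V$. Writing $W_p\subset V$ for the tangent plane $d_pf(T_pM)$ and choosing a complement, the surface is a graph over $W_p$ of a $V/W_p$-valued function whose Hessian at $p$ is (a representative of) the second fundamental form $\mathrm{II}_p$. The induced norm on $T_xM$ is $\Phi|_{T_xS}$ pulled back by $d_xf$; expanding $T_xS$ to second order in terms of $\mathrm{II}_p$ and expanding the restriction of $\Phi$ to this moving plane, one gets that the $2$-jet of $x\mapsto[\phi(x,\cdot)]$ is governed by how the restrictions $\Phi|_H$ vary as $H$ ranges over planes near $W_p$ in $\Gr_2(V)$, composed with the (at most quadratic) motion of $T_xS$ inside $\Gr_2(V)$ determined by $\mathrm{II}_p$. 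Combining with the previous paragraph: if $\mathrm{II}_p$ were nondegenerate, the map $x\mapsto T_xS$ would be, to second order, a nondegenerate $2$-parameter family of planes through $W_p$ (a ``$2$-dimensional'' piece of the Grassmannian), and along this family the normed planes $(H,\Phi|_H)$ would all be isometric up to the $o(|x-p|^2)$ error — i.e.\ their $2$-jets in $\Gr_2(V)$ at $W_p$ would be linearly equivalent.

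The final step is to argue that this second-order isometry data already forces $\Phi|_{W_p}$ to be Euclidean, contradicting that $\phi$ is non-Riemannian at $p$ — except that $\phi$ is only assumed second-order flat, not non-Riemannian, so instead the contradiction must be that a nondegenerate $\mathrm{II}_p$ together with second-order-constant induced norms is simply impossible when $\Phi_0$ is not Euclidean, and when $\Phi_0$ \emph{is} Euclidean the classical Gauss argument already gives degeneracy. Concretely, I expect the cleanest route is: the $2$-jet at $p$ of the induced metric depends on $\mathrm{II}_p$ only through the composition ``$\mathrm{II}_p$ followed by the second-derivative-of-$\Phi|_H$ tensor''; second-order flatness kills this $2$-jet; and a linear-algebra lemma (the infinitesimal version of Theorem \ref{t:sections}, established en route to or as a consequence of that theorem) says the only way to kill it with a nondegenerate $\mathrm{II}_p$ is for $\Phi$ to be Euclidean near $W_p$, in which case $\mathrm{II}_p$ is degenerate by Theorema Egregium since $K(p)=0$. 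The main obstacle will be setting up this infinitesimal/second-jet version of the section-rigidity statement and carefully bookkeeping the expansions of $\Phi$ restricted to a $2$-parameter family of moving planes — that is, isolating exactly which third-order data of $\Phi$ at $W_p$ enters, and showing that its vanishing against a nondegenerate form is the obstruction; the reduction steps (to constant $\Phi_0$, to a graph, to $2$-jets) are routine.
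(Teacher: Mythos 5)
Your setup (coordinates in which $\phi_0$ is a constant norm, the induced norm as the pullback of $\Phi$ restricted to the moving tangent plane, nondegeneracy of the second fundamental form making the Gauss map an immersion) matches the paper's, but the core of the argument is missing and two of your claimed reductions do not hold. First, the $2$-jet at $p$ of the induced metric does \emph{not} depend on the embedding only through its second fundamental form: differentiating the isometry identity twice produces the term $d_{\tilde v}\Phi\bigl(d^3_pf(v,w,w_1)\bigr)$ alongside the quadratic term in $d^2_pf$ (see \eqref{e:d2iso}), so the third-order Taylor coefficient of $f$ is an additional unknown that must be eliminated. The paper spends most of its proof doing exactly this (Lemmas \ref{l:tau} and \ref{l:T=0}), using the strict convexity of $\Phi$ and the tangency relations coming from the \emph{first}-order equation; nothing in your outline replaces these steps. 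Second, the ``infinitesimal version of Theorem \ref{t:sections}'' you want to invoke is not established anywhere and does not follow from Theorem \ref{t:sections}: the proof of Proposition \ref{p:sections} needs exact isometries between sections on a whole open set of planes (to produce the smooth family $L_\alpha$ via the implicit function theorem), whereas you only have constancy of a $2$-jet at the single plane $W_p=\operatorname{Im}d_pf$. A finite-jet condition at one plane cannot force $\Phi|_{W_p}$ to be Euclidean (that is infinitely many conditions on $\Phi$), and the correct conclusion of the infinitesimal analysis is not Euclideanness but precisely the degeneracy of the second fundamental form --- which is what the paper's direct computation yields.

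Your fallback branch is also unjustified: if $\phi_0=\phi(p,\cdot)$ happens to be Euclidean, neither the ambient norm $\Phi$ nor the metric $\phi$ near $p$ need be Euclidean or Riemannian, so ``the classical Gauss argument'' (Theorema Egregium) is not available in that case either. In short, the proposal correctly identifies the objects to expand but defers the entire content of the proof --- the elimination of $d^3_pf$ and the convexity argument forcing $S(v,w_1)S(w,w_1)=0$ for an $S$-orthogonal pair $v,w$ --- to an unproved jet-rigidity lemma whose natural correct form is essentially the theorem itself.
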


Theorems \ref{t:mono} and \ref{t:flat} 
imply that the answer to Question \ref{main-question} is affirmative if
the Finsler metric is second-order flat at $p$
or monochromatic (Riemannian or not), see Corollary \ref{cor:mono}.

The rest of the paper is organized as follows.
We begin with the proof of Theorem~\ref{t:flat} in Section~\ref{sec:flat}.
Subsequent arguments do not depend on Theorem~\ref{t:flat} directly
but they use similar ideas.
In Section~\ref{sec:mono} we prove the first part of
Theorem~\ref{t:sections}' (see Proposition \ref{p:sections})
and deduce Theorem~\ref{t:mono}.
In Section~\ref{sec:riem} we finish the proof of Theorem \ref{t:sections}'
and obtain some corollaries.
Finally in Section \ref{sec:example} we
construct examples of metrics that are not locally embeddable to dimension~3.

\subsubsection*{Acknowledgement}
The author thanks Vladimir Matveev and Fedor Petrov for
inspiring discussions and help in finding references,
and Dmitri Burago for his comments on the
first draft of the paper.

\section{Proof of Theorem \ref{t:flat}}
\label{sec:flat}

Let $(M,\phi)$ and $p$ be as in Theorem \ref{t:flat}
and $\phi_0$ as in Definition~\ref{d:soflat}.
Since the statement of the theorem is local,
we may restrict ourselves to a small coordinate neighborhood $U$ of~$p$.
Thus we may assume that $M=U\subset\R^2$
and use the standard identification $TU=U\times\R^2$.
Then $\phi$ is a function of two variables $x\in U$ and $v\in\R^2$.

We further assume that our local coordinates
are chosen so that $\phi_0$ is the restriction of 
a Banach-Minkowski norm. In other words, $\phi_0(x,v)=\phi_0(v)$
does not depend on~$x$.
Then \eqref{e:2oflat} implies that for every $v\in\R^2$
we have $\phi(p,v)=\phi_0(v)$,
\be
\label{e:d1phi=0}
 d_p\big(\phi(\cdot,v)\big) = 0,
\ee
and
\be
\label{e:d2phi=0}
 d^2_p\big(\phi(\cdot,v)\big) = 0.
\ee
Here and below $d_p$ denotes the differential of a function at $p$,
$d^2_p$ the second differential at $p$, etc.

Let $V=(V,\Phi)$ be a 3-dimensional Banach-Minkowski space and
$f\co(U,\phi)\to V$ a smooth isometric embedding. Then
\be
\label{e:iso}
 \Phi(d_xf(v)) = \phi(x,v)
\ee
for all $x\in U$ and $v\in\R^2$.
Define $H=\operatorname{Im} d_pf$, that is $H$ is the tangent plane to $f(U)$ at~$f(p)$,
regarded as a linear subspace of~$V$.
The map $d_pf$ is a linear isometry between $(\R^2,\phi_0)$ and $(H,\Phi|_H)$.
Fix an isomorphism between $V/H$ and $\R$ and denote by $S$ the second fundamental
form of $f$ at $p$ composed with this isomorphism. That is, $S$ is a symmetric 
real-valued bilinear form on $\R^2$ given by
$S(v,w)=\pi (d^2_pf(v,w))$ for all $v,w\in\R^2$,
where $\pi\co V\to V/H\cong\R$ is the quotient map.
Our goal is to prove that $S$ is degenerate.
We say that vectors $v,w\in\R^2$ are $S$-orthogonal if $S(v,w)=0$.

Let $v\in\R^2$ and $\tilde v=d_pf(v)$.
Differentiating \eqref{e:iso} with respect to $x$ at $x=p$
in the direction $w\in\R^2$ and taking into account \eqref{e:d1phi=0}
yields
\be\label{e:d1iso}
d_{\tilde v}\Phi (d^2_pf(v,w)) = 0.
\ee
Here $d_{\tilde v}\Phi$ denotes the differential of $\Phi$ at $\tilde v$,
this differential is a linear map from $V$ to $\R$,
and $d^2_pf(v,w)\in V$ is an argument of this linear map.
Subsequent formulas involving derivatives should be read in a similar way.

Differentiating \eqref{e:iso} twice with respect to $x$ at $x=p$
in directions $w,w_1\in\R^2$ and taking into account \eqref{e:d2phi=0}
yields
\be\label{e:d2iso}
 d_{\tilde v}\Phi (d^3_pf(v,w,w_1)) + d^2_{\tilde v}\Phi(d^2_pf(v,w),d^2_pf(v,w_1)) = 0.
\ee
We fix the notation $\tilde v=d_pf(v)$ for the rest of the proof.

Let $\Sigma$, $\gamma$ and $\tilde\gamma$ denote the unit spheres
of $\Phi$, $\phi_0$ and $\Phi|_H$, respectively.
That is,
$$
\begin{aligned}
\Sigma &= \{v\in V:\Phi(v)=1\}, \\
\gamma &= \{v\in\R^2:\phi_0(v)=1\}, \\
\tilde\gamma &= \Sigma\cap H=d_pf(\gamma).
\end{aligned}
$$
Note that $\gamma$ and $\tilde\gamma$ are
smooth strictly convex curves enclosing the origin in their respective planes.
For $v\in\gamma$, we denote
by $\ell_v\subset\R^2$ the tangent direction of $\gamma$ at $v$
(that is the one-dimensional linear subspace of~$\R^2$ parallel
to the tangent to $\gamma$ at~$v$)
and define $\tilde\ell_v=d_pf(\ell_v)$.

\begin{lemma}\label{l:tau}
There exists a vector $\tau\in V\setminus H$ such that
\be\label{e:tau}
 d^2_pf(v,w) = S(v,w) \cdot \tau
\ee
for all $v,w\in\R^2$.
\end{lemma}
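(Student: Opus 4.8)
The plan is to prove the stronger statement that the $V$-valued Hessian $B:=d^2_pf$ takes all its values on a single line; since $S=\pi\circ B$, this is exactly the assertion $B(v,w)=S(v,w)\tau$ for a fixed $\tau$. The engine of the argument is the first-order relation \eqref{e:d1iso} alone, together with the symmetry of $B$; the second-order relation \eqref{e:d2iso} is not needed here. For $v\neq0$ write $K_v:=\ker d_{\tilde v}\Phi\subset V$, the tangent plane to $\Sigma$ at $\tilde v$, so that \eqref{e:d1iso} reads $B(v,w)\in K_v$ for all $w$. The only convex-geometric input I need is that, for $v\in\gamma$, $K_v\cap H=\tilde\ell_v$: the curve $\tilde\gamma=\Sigma\cap H$ is tangent to both $\Sigma$ and $H$ at $\tilde v$, so $\tilde\ell_v\subseteq K_v\cap H$, while $d_{\tilde v}\Phi(\tilde v)=\Phi(\tilde v)>0$ shows $d_{\tilde v}\Phi|_H\neq0$, so $K_v\cap H$ is a line and therefore equals $\tilde\ell_v$.

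The heart of the proof is the reduction to the claim that $B(v,w)=0$ whenever $S(v,w)=0$. Given such a pair, normalize $v,w\in\gamma$ (the claim is scale invariant). From $\pi(B(v,w))=S(v,w)=0$ we get $B(v,w)\in H$, and together with $B(v,w)\in K_v$ this yields $B(v,w)\in K_v\cap H=\tilde\ell_v$. Running the same argument with $v$ and $w$ interchanged, using $B(v,w)=B(w,v)$ and $S(w,v)=0$, gives $B(v,w)\in\tilde\ell_w$ as well. Hence $B(v,w)\in\tilde\ell_v\cap\tilde\ell_w=\{0\}$ as soon as $\tilde\ell_v\neq\tilde\ell_w$, and the claim holds for all such pairs.

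It remains to remove the exceptional pairs and to pass from the claim to $B=S\tau$. For the former I would use continuity: on $\gamma\times\gamma$ the pairs with $\tilde\ell_v=\tilde\ell_w$ (those $v,w$ whose tangents to $\gamma$ are parallel) form a nowhere dense subset of the zero locus of $S$, and $B$ is continuous, so $B$ vanishes on that whole locus, hence on every $S$-orthogonal pair. For the latter, it is elementary linear algebra on $\R^2$, organized by the rank of $S$. If $S$ is nondegenerate, diagonalize it in a basis $e_1,e_2$; evaluating the claim on suitable $S$-orthogonal combinations of $e_1,e_2$ forces $B(e_1,e_2)=0$ and $B(e_i,e_i)=S(e_i,e_i)\,\tau$ for a common vector $\tau$, whence $B=S\tau$. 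If $S$ has rank one, say $S=\pm\lambda\otimes\lambda$, then $S(v,w)=0$ for all $w\in\ker\lambda$, so $\ker\lambda\subseteq\ker B$ and again $B=S\tau$. If $S\equiv0$, the claim forces $B\equiv0$. In the first two cases one checks $\pi(\tau)\neq0$, so $\tau\notin H$; in the last, any $\tau\notin H$ will do.

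The one genuinely delicate point is the continuity step: I must check that the parallel-tangent pairs are really negligible inside the zero locus of $S$ in every rank regime, in particular when $S\equiv0$ and that locus is two-dimensional while the exceptional set is only a curve. Granting that, the lemma follows purely from \eqref{e:d1iso}, the symmetry of the Hessian, and the identity $K_v\cap H=\tilde\ell_v$.
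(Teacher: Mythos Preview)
Your overall strategy is the same as the paper's: show that $B(v,w)=0$ for linearly independent $S$-orthogonal pairs, then deduce $B=S\tau$ componentwise. The difference is in how you dispose of the exceptional pairs with $\tilde\ell_v=\tilde\ell_w$, and here your density argument has a genuine gap.

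Your claim that the parallel-tangent pairs are nowhere dense in the zero locus of $S$ can fail when $S$ is nondegenerate. Both the zero locus $Z=\{(v,w)\in\gamma\times\gamma:S(v,w)=0\}$ and the parallel-tangent set $P=\Delta\cup\{(v,a(v))\}$ (where $a$ is the antipodal map of $\gamma$) are one-dimensional, and they can share an arc. For a concrete example, take $S$ to be the Euclidean inner product and build $\gamma$ in polar form $\gamma(\theta)=r(\theta)e^{i\theta}$ so that on a short interval $[0,\ep]$ one has $r'/r=1$ (tangent angle $\theta+\pi/4$) while on $[\pi/2,\pi/2+\ep]$ one has $r'/r=-1$ (tangent angle $\theta+5\pi/4$); filling in smoothly elsewhere with $\int_0^{2\pi}\cot\beta=0$ keeps $\gamma$ smooth, closed, and strictly convex. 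Then for $\theta\in[0,\ep]$ the antipodal point of $\gamma(\theta)$ is $\gamma(\theta+\pi/2)$, and since $\gamma(\theta)\perp\gamma(\theta+\pi/2)$ in the Euclidean sense, the arc $\{(\gamma(\theta),\gamma(\theta+\pi/2)):\theta\in[0,\ep]\}$ lies in $Z\cap P$. So $P\cap Z$ has nonempty interior in $Z$, and continuity alone does not extend $B=0$ across that arc.

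The paper avoids this entirely with a one-line trick: if $\ell_v=\ell_w$ with $v,w\in\gamma$ linearly independent, replace $v$ by the opposite unit vector $v'=-v/\phi_0(-v)$. Then $v',w$ are still linearly independent and $S$-orthogonal, and $\ell_{v'}\ne\ell_w$, since otherwise $v,w,v'$ would be three distinct points of the strictly convex curve $\gamma$ sharing a tangent direction. This gives $B(v',w)=0$ directly, hence $B(v,w)=0$. With this in hand the paper quotes the elementary fact (your case analysis) that a symmetric bilinear form vanishing on all linearly independent $S$-orthogonal pairs is a scalar multiple of $S$, and applies it to each component of $B$.
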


\begin{proof}

Let $v,w\in\gamma$ be linearly independent $S$-orthogonal vectors.
We are going to show that $d^2_p(v,w)=0$.

First assume that $\ell_v\ne\ell_w$.
Let $\xi=d^2_pf(v,w)$.
Since $S(v,w)=0$, we have $\xi\in H$ by the definition of the second fundamental form.
By \eqref{e:d1iso} we have
$ d_{\tilde v}\Phi(\xi) = 0 $ where $\tilde v=d_pf(v)$.
This means that $\xi$ is a tangent vector to $\Sigma$ at~$\tilde v$.
Since $\xi\in H$, it follows that $\xi\in\tilde\ell_v$.
Interchanging $v$ and $w$ yields that $\xi\in\tilde\ell_w$,
hence $\xi\in\tilde\ell_v\cap\tilde\ell_w$.
Since $\tilde\ell_v\ne\tilde\ell_w$,
it follows that $\xi=0$ as claimed.

Now consider the case $\ell_v=\ell_w$.
In this case one can apply the above argument to the opposite
unit vector $v'=\frac{-v}{\phi_0(-v)}$ in place of $v$.
Indeed, $\ell_{v'}\ne\ell_w$
since otherwise $\ell_v=\ell_w=\ell_{v'}$,
contrary to the strict convexity of $\gamma$.
Therefore $d^2_pf(v',w)=0$ and hence $d^2_pf(v,w)=0$.

Thus $d^2_pf(v,w)=0$ for all pairs of vectors $v,w\in\R^2$ 
that are linearly independent and $S$-orthogonal.
To finish the proof we need the following fact,
which is left as an exercise to the reader.

\begin{fact}
Let $S$ and $S_1$ be symmetric bilinear forms on $\R^2$ such that
$S_1(v,w)=0$ for all pairs of linearly independent $S$-orthogonal vectors $v,w\in\R^2$.
Then $S_1=\lambda S$ for some $\lambda\in\R$. 
\end{fact}

Pick a basis $e_1,e_2,e_3$ of $V$ such that $e_1,e_2\in H$ and 
$e_3$ is mapped to $1\in\R$ by
the quotient map $\pi\co V\to V/H\cong\R$
(recall that we have fixed an isomorphism between $V/H$ and $\R$).
Decompose $d^2_pf$ into coordinate components with respect to this basis:
$d^2_pf = S_1e_1+S_2e_2+S_3e_3$.
Applying the above fact to $S_1$ and $S_2$ yields that
$S_1=\lambda_1 S$ and $S_2=\lambda_2 S_2$ for some $\lambda_1,\lambda_2\in\R$.
And $S_3=S$ by the choice of~$e_3$.
Thus \eqref{e:tau} holds for $\tau=\lambda_1e_1+\lambda_2e_2+e_3$.
\end{proof}

From now on we assume that $S\ne 0$
(otherwise the statement of Theorem \ref{t:flat} is trivial).
Let $\tau$ be the vector from Lemma \ref{l:tau}
and $P\co V\to H$ the projector such that $P(\tau)=0$.
Substituting \eqref{e:tau} into \eqref{e:d1iso} yields
$$
 S(v,w)\cdot d_{\tilde v}\Phi(\tau) = 0.
$$
Since $S\ne 0$, it follows that
\be\label{e:dPhi(tau)}
 d_{\tilde v}\Phi(\tau) = 0
\ee
for almost all $\tilde v\in\tilde\gamma$ and hence for all $\tilde v\in\tilde\gamma$.

For $v\in\gamma$ define
$ \ka(v) = d^2_{\tilde v}\Phi(\tau,\tau) $.
Due to strict convexity of $\Phi$ we have $\ka(v)>0$ for all $v\in\ga$.
Define a map $T\co\R^2\times\R^2\times\R^2\to H$ by $T=P\circ d^3_pf$.
This map is symmetric and linear in each argument.
Due to \eqref{e:dPhi(tau)}, the first term in \eqref{e:d2iso} equals
$$
d_{\tilde v}\Phi (d^3_pf(v,w,w_1)) = d_{\tilde v}\Phi (T(v,w,w_1)) .
$$
The second term in \eqref{e:d2iso} equals
$$
 d^2_{\tilde v}\Phi(d^2_pf(v,w),d^2_pf(v,w_1)) = \ka(v) S(v,w) S(v,w_1)
$$
by \eqref{e:tau} and the definition of $\ka(v)$.
These identities and \eqref{e:tau} imply that
\be
\label{e:dPhiT}
 d_{\tilde v}\Phi (T(v,w,w_1)) + \ka(v) S(v,w) S(v,w_1) = 0
\ee
for all $v,w,w_1\in\R^2$.

\begin{lemma}\label{l:T=0}
If $v,w\in\ga$ are linearly independent and $S$-orthogonal,
then 
$$
T(v,w,w_1)=0
$$
for all $w_1\in\R^2$.
\end{lemma}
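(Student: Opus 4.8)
The plan is to exploit equation \eqref{e:dPhiT} together with the structure of the second fundamental form coming from Lemma \ref{l:tau}. Fix linearly independent $S$-orthogonal vectors $v,w\in\ga$; recall from the proof of Lemma \ref{l:tau} that this forces $d^2_pf(v,w)=0$. Plugging $w$ in place of both of the last two slots of \eqref{e:dPhiT} but keeping $v$ in the first slot, and then permuting the roles, we will extract several linear constraints on the vector $T(v,w,w_1)\in H$. The key observation is that \eqref{e:dPhiT} applied with $w=w_1$ gives $d_{\tilde v}\Phi(T(v,w,w))=-\ka(v)S(v,w)^2=0$ since $S(v,w)=0$, so $T(v,w,w)$ is tangent to $\Sigma$ at $\tilde v$; but $T(v,w,w)\in H$, hence $T(v,w,w)\in\tilde\ell_v$. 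Doing the same with the roles of $v$ and $w$ swapped (using $S(w,v)=0$ as well) shows $T(w,v,v)\in\tilde\ell_w$, and symmetry of $T$ identifies these with each other up to reindexing; the distinctness $\tilde\ell_v\ne\tilde\ell_w$ (which holds after possibly replacing $v$ by its opposite unit vector, exactly as in Lemma \ref{l:tau}) will pin the relevant combination down to $0$.

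More precisely, first I would establish the conclusion for the special argument $w_1=w$, obtaining $T(v,w,w)=0$ from the intersection $\tilde\ell_v\cap\tilde\ell_w=\{0\}$ just as above. Next I would run the same argument with $v$ replaced by $w$ and $w$ by $v$ to get $T(w,v,v)=0$, i.e.\ $T(v,v,w)=0$ by symmetry of $T$. For a general $w_1\in\R^2$, write $w_1=av+bw$ in the basis $\{v,w\}$ of $\R^2$; then $T(v,w,w_1)=aT(v,v,w)+bT(v,w,w)=0$ by trilinearity and the two special cases. That would finish the lemma. I should double-check that the degenerate subcase $\ell_v=\ell_w$ is handled correctly: there one replaces $v$ by $v'=-v/\phi_0(-v)$, which is still $S$-orthogonal to $w$ (since $S$ is bilinear and $S(v,w)=0$) and satisfies $\ell_{v'}\ne\ell_w$ by strict convexity of $\ga$; then $T(v',w,w_1)=0$ for all $w_1$, and scaling gives $T(v,w,w_1)=0$.

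The main obstacle I anticipate is making sure that \eqref{e:dPhiT} really does constrain $T(v,w,w_1)$ and not merely some projection of it: the equation only sees $d_{\tilde v}\Phi(T(v,w,w_1))$, a single scalar condition, so one scalar equation at each $\tilde v$. The point is that $T$ takes values in the two-dimensional space $H$, and $d_{\tilde v}\Phi|_H$ has kernel exactly $\tilde\ell_v$; so the condition $d_{\tilde v}\Phi(T(v,w,w))=0$ places $T(v,w,w)$ in a one-dimensional subspace, and we need a second, independent such subspace to conclude $T(v,w,w)=0$. The swap $v\leftrightarrow w$ supplies it, but only because $\tilde\ell_v\ne\tilde\ell_w$; this is where the strict convexity and the reduction to the case $\ell_v\ne\ell_w$ are essential, and this is the step I would write most carefully.
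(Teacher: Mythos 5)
Your argument is essentially the paper's: the factor $S(v,w)=0$ kills the second term of \eqref{e:dPhiT}, placing values of $T$ in $\tilde\ell_v$, the swap $v\leftrightarrow w$ places them in $\tilde\ell_w$, and $\tilde\ell_v\cap\tilde\ell_w=\{0\}$ after the same reduction to $\ell_v\ne\ell_w$ as in Lemma \ref{l:tau}. Two comments. First, the detour through the special cases $w_1=w$ and $w_1=v$ followed by trilinearity is unnecessary: the second term of \eqref{e:dPhiT} is $\ka(v)S(v,w)S(v,w_1)$, which vanishes for \emph{every} $w_1$ once $S(v,w)=0$, so $T(v,w,w_1)\in\tilde\ell_v$ and (swapping) $T(w,v,w_1)=T(v,w,w_1)\in\tilde\ell_w$ hold for arbitrary $w_1$, and the lemma follows in one step; this is what the paper does. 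Second, the sentence claiming that ``symmetry of $T$ identifies'' $T(v,w,w)$ with $T(w,v,v)$ is false --- these are different vectors ($T(w,v,v)=T(v,v,w)\ne T(v,w,w)$ in general). Your second paragraph avoids relying on that sentence, but it leaves a small hole: to get $T(v,w,w)=0$ you need $T(v,w,w)=T(w,v,w)\in\tilde\ell_w$, which is not what was shown ``above'' (there you only obtained $T(w,v,v)\in\tilde\ell_w$). It does follow from \eqref{e:dPhiT} with $w$ in the first slot and arguments $(w,v,w)$, since the second term $\ka(w)S(w,v)S(w,w)$ vanishes; with that one line added (or, better, with the specialization of $w_1$ dropped altogether) the proof is complete and correct.
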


\begin{proof}
Similarly to the proof of Lemma \ref{l:tau}
we may assume that $\ell_v\ne\ell_w$,
otherwise replace $v$ by $v'=\frac{-v}{\phi_0(-v)}$.
Since $S(v,w)=0$, the second term in \eqref{e:dPhiT} vanishes,
hence $d_{\tilde v}\Phi (T(v,w,w_1))=0$.
Since $T(v,w,w_1)\in H$, it follows that
$T(v,w,w_1)\in\tilde\ell_v$.
Interchanging $v$ and $w$ yields that $T(v,w,w_1)=T(w,v,w_1)\in\tilde\ell_w$.
Since $\tilde\ell_v\ne\tilde\ell_w$, it follows that
$T(v,w,w_1)\in\tilde\ell_v\cap\tilde\ell_w=\{0\}$.
\end{proof}

Fix $v$ and $w$ as in Lemma \ref{l:T=0}
and let $w_1$ range over $\R^2$.
By Lemma \ref{l:T=0} the first term in \eqref{e:dPhiT} vanishes.
Since $\ka(v)\ne 0$, it follows that 
$$
S(v,w_1)S(w,w_1)=0
$$
for all $w_1\in\R^2$.
This implies that one of the linear functions $S(v,\cdot)$ and $S(w,\cdot)$
vanishes everywhere on~$\R^2$. Equivalently,
either $v$ or $w$ belong to the kernel of~$S$.
Therefore $S$ is degenerate.
This finishes the proof of Theorem~\ref{t:flat}.

\section{Linearly equivalent sections}
\label{sec:mono}

In this section we consider 3-dimensional Banach-Minkowski spaces
with many linearly equivalent cross-sections.
Our main goal is to prove the first claim of Theorem \ref{t:sections}' in dimension~3.
We restate it as the following proposition.

\begin{prop}\label{p:sections}
Let $V=(V^3,\Phi)$ be a Banach-Minkowski space and
$\U\subset\Gr_2(V)$ an open set such that
all normed planes $(H,\Phi|_H)$, $H\in\U$, are isometric.
Then $\Phi|_H$ is a Euclidean norm for every $H\in\U$.
\end{prop}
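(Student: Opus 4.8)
The plan is to exploit the fact that the family of isometric cross-sections has a continuous (indeed smooth) ambient structure and to show that the only way a smooth one-parameter family of linear automorphisms of a strictly convex planar body can be ``transported'' through the Grassmannian is if the body is an ellipse. First I would set up the ambient bundle: over the open set $\U\subset\Gr_2(V)$ consider the tautological rank-$2$ bundle whose fiber over $H$ is $H$ itself, together with the field of convex curves $H\mapsto\tilde\gamma_H:=\{v\in H:\Phi(v)=1\}$. Since $\Phi$ is smooth and strictly convex, these curves depend smoothly on $H$, and the isometry hypothesis says that for each pair $H,H'\in\U$ there is a linear isomorphism $H\to H'$ carrying $\tilde\gamma_H$ to $\tilde\gamma_{H'}$. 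The key structural point is that such an isomorphism is unique up to the \emph{linear automorphism group} of the planar body $(\tilde\gamma_H)$, and by the Auerbach--Mazur--Ulam circle of ideas this group is a compact Lie group; it is either finite, or equals a conjugate of $SO(2)$ (the latter precisely when $\tilde\gamma_H$ is an ellipse). So the strategy splits into showing that the ``finite automorphism group'' case is impossible.

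Next I would differentiate. Fix $H_0\in\U$ and a smooth path $H_t\in\U$; using a smooth family of isometries $A_t\co H_0\to H_t$ (which exists locally once one trivializes, after passing to a cover if the automorphism group is finite, since then the choice is locally rigid) one gets a first-order ``deformation field'' along $\tilde\gamma_{H_0}$. The infinitesimal version of the constraint ``$A_t$ is a linear map preserving $\Phi$'' is that the derivative of $\Phi$ along the deformation vanishes on $\tilde\gamma_{H_0}$; this is a linear condition of exactly the type appearing in \eqref{e:d1iso}. Concretely, writing the motion of the plane $H_t$ as generated by a linear map $X\in\mathrm{Lin}(H_0,V)$ modulo $\mathrm{Lin}(H_0,H_0)$ (i.e.\ an element of $T_{H_0}\Gr_2(V)$), and the compensating linear map as $Y\in\mathrm{Lin}(H_0,H_0)$, the condition reads
\be\label{e:infin}
  d_v\Phi\big(X(v)+Y(v)\big)=0\quad\text{for all }v\in\tilde\gamma_{H_0}.
\ee
Here $d_v\Phi(X(v))$ is determined by the extrinsic geometry of $\Sigma$ (it measures how the ``second fundamental form'' of $\Sigma$ in the direction normal to $H_0$ pushes $\tilde\gamma_{H_0}$), while the $Y$-term is the tangential compensation we are free to choose. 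Since $\Gr_2(V^3)$ is three-dimensional and $H_0$ is two-dimensional, $T_{H_0}\Gr_2(V)\cong\mathrm{Lin}(H_0,V/H_0)$ has dimension $2$; so we obtain a $2$-parameter family of such compatibility equations \eqref{e:infin}, one for each admissible $X$, each solvable for some $Y$ depending linearly on $X$.

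The main obstacle — and the heart of the argument — is to extract from \eqref{e:infin} that $\tilde\gamma_{H_0}$ is an ellipse. The idea is that \eqref{e:infin} says the normal ``shear'' of $\Sigma$ across $H_0$ is realized, along $\tilde\gamma_{H_0}$, by an \emph{infinitesimal linear automorphism} of the curve, i.e.\ by a linear vector field tangent to $\tilde\gamma_{H_0}$. A strictly convex smooth closed curve $\tilde\gamma$ admits a nonzero linear vector field tangent to it if and only if it is an ellipse (the flow of such a field is a one-parameter group of linear automorphisms, hence by compactness conjugate into $SO(2)$, forcing an invariant inner product). Thus if at some $v$ the normal shear of $\Sigma$ is nonzero — which, I expect, one arranges by choosing $X$ suitably, using that $\Sigma$ is strictly convex so it genuinely curves away from $H_0$ — then the compensating $Y$ is a nonzero linear endomorphism of $H_0$ whose restriction to $\tilde\gamma_{H_0}$ is everywhere tangent, and we are done. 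The remaining subtlety is the degenerate possibility that \emph{every} admissible $X$ produces vanishing normal shear along all of $\tilde\gamma_{H_0}$; I would rule this out by a dimension count, noting that the normal component of $d^2\Phi$ restricted to $H_0$ is a nonzero quadratic form (again by strict convexity of $\Phi$), so the map $X\mapsto(\text{normal shear of }\Sigma)$ cannot be identically zero. Assembling these pieces gives that $\tilde\gamma_{H_0}$ is an ellipse, i.e.\ $\Phi|_{H_0}$ is Euclidean; since $H_0\in\U$ was arbitrary, the proposition follows.
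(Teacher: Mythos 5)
Your setup is the same as the paper's first stage: since a non-Euclidean strictly convex planar norm has a discrete isometry group, one gets a unique smooth family of isometries $L_\alpha\co(H,\Phi|_H)\to(H_\alpha,\Phi|_{H_\alpha})$ near a fixed plane (the paper's Lemma~\ref{l:smooth}), and differentiating the two constraints ``$L_\alpha$ lands in $H_\alpha$'' and ``$L_\alpha$ preserves $\Phi$'' gives exactly your equation \eqref{e:infin}. But the step where you extract an ellipse from it has a genuine gap. The condition $d_v\Phi\bigl(X(v)+Y(v)\bigr)=0$ says that $X(v)+Y(v)$ lies in the \emph{tangent plane} to the surface $\Sigma$ at $v$, which is two-dimensional; since $X(v)$ generally has a nonzero component transverse to $H_0$, this does \emph{not} force the tangential part $Y(v)$ to lie on the tangent \emph{line} $\ell_v$ of the curve $\tilde\gamma_{H_0}$. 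One only gets a tangency-to-the-curve conclusion at those $v$ where the transverse component of the deformation vanishes, i.e.\ on a single line $\ker h$ for each deformation direction $h$ (the paper's Lemma~\ref{l:Rhv}). Assembling these one-line constraints over all directions produces a \emph{quadratic}, not linear, vector field $Q(v)=R(J(v))(v)$ tangent to $\tilde\gamma_{H_0}$, and one must first show $Q$ vanishes on a line (an orientation argument using $Q(v)\parallel Q(-v/\phi_0(-v))$) and factor $Q=fW$ with $W$ linear before your ``linear field tangent to a strictly convex curve $\Rightarrow$ ellipse'' principle becomes applicable. Your claim that the compensating $Y$ is directly a linear field tangent to the curve skips all of this and is not correct as stated.

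The second and more serious gap is that even the corrected first-order analysis at a single plane cannot yield the conclusion. Under the contradiction hypothesis that $\Phi|_H$ is not Euclidean, the linear factor $W$ must vanish, and what survives is only the existence of a nonzero vector $\tau=\tau_H$ tangent to $\Sigma$ at every point of $H\cap\Sigma$ (equivalently, a norm non-increasing linear projector onto $H$) --- a condition that a single section of a non-Euclidean norm can perfectly well satisfy, so there is no contradiction yet. The paper needs an entirely separate second stage (Proposition~\ref{p:kakutani}): using that such a $\tau_H$ exists for \emph{every} plane in the open set $\U$, it shows the tangent planes of $\Sigma$ at antipodal points are parallel, builds the map $\ell\mapsto(\text{tangent plane direction})$ from an open set of $\Gr_1(V)$ to $\Gr_2(V)$, shows it takes lines to lines, invokes the local fundamental theorem of projective geometry to linearize it, and only then recovers a linear vector field tangent to each section. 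This localized Blaschke--Kakutani argument is the heart of the proof and is entirely absent from your proposal; the ``dimension count'' you invoke to rule out the degenerate case does not substitute for it.
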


Theorem \ref{t:sections} follows from Proposition \ref{p:sections}
and Lemma \ref{l:quadratic}, see the next section.
In this section we prove Proposition \ref{p:sections}
and deduce Theorem \ref{t:mono} from it.
The proof of Proposition \ref{p:sections} is based on differential-geometric
analysis of $\Phi$ near a plane $H\in\U$.
The key result of this analysis is Lemma \ref{l:Rtau}.

\medbreak

Let $(V,\Phi)$ and $\U$ be as in Proposition \ref{p:sections}.
Fix $H\in\U$ and a vector $\nu\in V\setminus H$.
First we define a convenient local parametrization of $\Gr_2(V)$.
Namely a neighborhood of $H$ in $\Gr_2(V)$ is parametrized by
a neighborhood of 0 in the dual space $H^*$ as follows.
For $\alpha\in H^*$ let $\pi_\alpha\co V\to\R$ be the unique
linear function such that $\pi_\alpha|_H=\alpha$ and $\pi_\alpha(\nu)=1$,
and let $H_\alpha=\ker \pi_\alpha$.
Then $H_0=H$ and the map $\alpha\mapsto H_\alpha$ is a diffeomorphism
from $H^*$ onto the set of all planes from $\Gr_2(V)$ that do not contain $\nu$.
We restrict this diffeomorphism to a neighborhood $U$ of 0 in $H^*$ 
such that $H_\alpha\in\U$ for all $\alpha\in U$.
Throughout the proof we keep replacing this neighborhood by smaller ones
but use the same notation $U$ for all these neighborhoods.

\begin{remark}\label{r:coord}
Let $x,y,z$ be coordinates in $V$ associated to a basis $e_1,e_2,e_3$
where $e_1,e_2\in H$ and $e_3=\nu$.
Then the map $\alpha\mapsto H_\alpha$ can be described as follows:
$H_\alpha$ is the plane defined by the equation $ax+by+z=0$
where $(a,b)$ are the coordinates of $\alpha\in H^*$ 
with respect to the dual basis.
\end{remark}

The map $\alpha\mapsto\pi_\alpha$ is affine.
Its linear component is the map $I\co H^*\to V^*$
defined by the relations
$I(\alpha)|_H = \alpha$ and $I(\alpha)(\nu) = 0$ 
for all $\alpha\in H^*$.
We will need the following calculation:
If $t\mapsto\alpha(t)$ is a smooth path in $H^*$ with $\alpha(0)=0$
and $\dot\alpha(0)=h$,
and $t\mapsto v(t)$ is a smooth path in $V$ with $v(0)=v_0\in H$, then
\be\label{e:dpialpha}
 \tfrac d{dt}\big|_{t=0} \pi_{\alpha(t)}(v(t))
 = \pi_0(\dot v(0)) + I(\dot\alpha(0))(v(0)) = \pi_0(\dot v(0)) + h(v_0) .
\ee

Let $\phi_\alpha$ denote the restriction of $\Phi$ to $H_\alpha$.
Similarly to the previous section we denote
by $\Sigma$ the unit sphere of $\Phi$,
by $\ga$ the unit circle of $(H,\phi_0)$,
and by $\ell_v$ the tangent direction to~$\ga$ at $v\in\ga$.
We extend the notation $\ell_v$ to all $v\in H\setminus\{0\}$
by homogeneity: $\ell_{\lambda v}=\ell_v$ for all $\lambda>0$.
For vector spaces $X$ and $Y$ we denote 
by $\Lin(X,Y)$ the space of all linear maps from $X$ to~$Y$.

Our assumptions 
imply that $(H_\alpha,\phi_\alpha)$ is isometric to $(H,\phi_0)$
for all $\alpha\in U$.
Arguing by contradiction, suppose that the norm $\phi_0$ is not Euclidean.
We need the following technical lemma.

\begin{lemma}\label{l:smooth}
If $U$ is sufficiently small then there exists a smooth family
$\{L_\alpha\}_{\alpha\in U}$ of linear maps from $H$ to $V$
such that $L_0=\id_H$, $L_\alpha(H)=H_\alpha$, and
$L_\alpha$ is an isometry from $(H,\phi_0)$ to $(H_\alpha,\phi_\alpha)$
for each $\alpha\in U$.
\end{lemma}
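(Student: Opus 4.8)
The plan is to reduce the statement to a question about a single normed plane and then combine the finiteness of the linear isometry group of a non-Euclidean norm with a covering-space lifting argument.

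\emph{Step 1 (reduction to a fixed space).} For $\alpha\in U$ I would introduce the linear map $\Pi_\alpha\co H\to V$, $\Pi_\alpha(v)=v-I(\alpha)(v)\,\nu$. Since $\pi_\alpha|_H=\alpha$ and $\pi_\alpha(\nu)=1$, one has $\pi_\alpha(\Pi_\alpha v)=\alpha(v)-\alpha(v)=0$ for $v\in H$, so $\Pi_\alpha$ takes values in $H_\alpha$, and it is injective because $v-I(\alpha)(v)\nu=0$ forces $v\in H\cap\R\nu=\{0\}$; hence $\Pi_\alpha$ is a linear isomorphism $H\to H_\alpha$ with $\Pi_0=\id_H$, and $\Pi_\alpha v$ depends smoothly on $(\alpha,v)$. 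Put $\psi_\alpha(v)=\Phi(\Pi_\alpha v)$; then $\{\psi_\alpha\}_{\alpha\in U}$ is a smooth family of Banach--Minkowski norms on $H$ with $\psi_0=\phi_0$, and since $\Pi_\alpha$ is a linear isometry from $(H,\psi_\alpha)$ onto $(H_\alpha,\phi_\alpha)$ while the latter is isometric to $(H,\phi_0)$, every $(H,\psi_\alpha)$ is linearly isometric to $(H,\phi_0)$. It now suffices to produce a smooth family $\{B_\alpha\}_{\alpha\in U}$ in $GL(H)$ with $B_0=\id_H$ and $\psi_\alpha\circ B_\alpha=\phi_0$ for every $\alpha$: then $L_\alpha:=\Pi_\alpha\circ B_\alpha$ satisfies $L_0=\id_H$, $L_\alpha(H)=\Pi_\alpha(H)=H_\alpha$, and $\Phi(L_\alpha v)=\psi_\alpha(B_\alpha v)=\phi_0(v)$, so $L_\alpha$ is an isometry from $(H,\phi_0)$ to $(H_\alpha,\phi_\alpha)$.

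\emph{Step 2 (finiteness of the isometry group).} Let $G=\{A\in GL(H):\phi_0\circ A=\phi_0\}$. Being closed and preserving the bounded body $\{\phi_0\le1\}$, the group $G$ is a compact subgroup of $GL(H)$, so after averaging an inner product over $G$ we may assume $G\subset O(2)$. If $G$ were infinite its identity component would be $SO(2)$, and invariance of the unit circle of $\phi_0$ under all rotations about $0$ would make that circle round, hence $\phi_0$ Euclidean, contrary to our standing assumption. Thus $G$ is finite.

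\emph{Step 3 (orbit map and lift).} I would regard the set $\mathcal N$ of Banach--Minkowski norms on $H$ as an open subset of a Banach space — e.g.\ identify a norm with its restriction to a fixed auxiliary circle and use the $C^2$ topology — with the smooth right action $\psi\cdot A=\psi\circ A$ of $GL(H)$, and consider the orbit map $\rho\co GL(H)\to\mathcal N$, $\rho(A)=\phi_0\circ A^{-1}$. Its fibers are the right cosets $AG$, hence discrete, and its differential at $\id_H$ sends $X\in\Lin(H,H)$ to the function $v\mapsto-d_v\phi_0(Xv)$, which vanishes only for $X=0$ (otherwise $e^{tX}\in G$ for all $t$, impossible as $G$ is discrete); so $\rho$ is an immersion. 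Hence $\rho$ descends to an injective immersion of the manifold $GL(H)/G$ into $\mathcal N$, and since $\rho$ is also proper (for $\alpha$ near $0$ the norms $\psi_\alpha$ are uniformly two-sided comparable to $\phi_0$, which keeps $\rho^{-1}$ of a compact family of such norms inside a compact subset of $GL(H)$), this injective immersion is an embedding; consequently $\rho\co GL(H)\to\mathcal O:=\rho(GL(H))$ is a smooth finite-sheeted covering onto a submanifold $\mathcal O\subset\mathcal N$. Now $\alpha\mapsto\psi_\alpha$ is a smooth map of $U$ into $\mathcal O$ with $\psi_0=\rho(\id_H)$; after shrinking $U$ to a ball it lifts through $\rho$ to a smooth map $\alpha\mapsto B_\alpha$ with $B_0=\id_H$ and $\rho(B_\alpha)=\psi_\alpha$, i.e.\ $\psi_\alpha\circ B_\alpha=\phi_0$. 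Combined with Step 1 this gives the required family $L_\alpha=\Pi_\alpha\circ B_\alpha$.

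\emph{Main obstacle.} The one point that is not bookkeeping is the claim in Step 3 that the orbit $\mathcal O$ is a submanifold and $\rho$ a covering onto it: $\mathcal N$ is infinite-dimensional, so one has to verify both the immersion property — and here non-Euclideanness of $\phi_0$, via finiteness of $G$, is indispensable, since if $SO(2)\subset G$ then $d\rho$ has a kernel and the lift is neither unique nor canonically smooth — and the properness of $\rho$, which reduces to the uniform comparability of the norms in play. Everything else (the explicit $\Pi_\alpha$, the compactness argument of Step 2, the covering-space lifting of Step 3) is routine.
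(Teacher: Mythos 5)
Your proposal is correct, but it reaches the conclusion by a genuinely different route than the paper. The paper never leaves finite dimensions: it characterizes the maps $L_\alpha$ as solutions of four scalar equations $f_i(L)=\Phi(L(v_i))-1=0$ on $\Lin(H,V)$ for well-chosen $v_1,\dots,v_4\in\ga$, and gets smoothness from the ordinary implicit function theorem once the transversality condition \eqref{e:regular} is verified; that verification is exactly your computation that a nonzero $A\in\Lin(H,H)$ with $d_v\phi_0(A(v))=0$ for all $v$ would generate a one-parameter group of self-isometries of $(H,\phi_0)$. You instead pull everything back to the fixed plane via the explicit isomorphisms $\Pi_\alpha$, and then run an orbit-map argument in an infinite-dimensional space of norms: finiteness of the isometry group $G$, injectivity of the differential of $\rho(A)=\phi_0\circ A^{-1}$ at the identity (the same key computation), properness, and covering-space lifting. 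Both proofs therefore hinge on the identical analytic fact; the difference is packaging. Your version is conceptually transparent --- it exhibits the $GL(H)$-orbit of $\phi_0$ as an embedded submanifold covered by $GL(H)$, which ``explains'' why the family is smooth --- but it pays for this with Banach-manifold overhead (smoothness of $\alpha\mapsto\psi_\alpha$ into $C^2$ of a circle, immersions with complemented image, the fact that a proper injective immersion is an embedding, smoothness of maps into embedded submanifolds). The paper's four-equation trick is a way of cutting the same orbit down to a finite-dimensional implicit-function-theorem problem and is more elementary. Note also that since the lemma permits shrinking $U$, you do not really need the global covering structure: a smooth local left inverse of $\rho$ near $\id_H$ (available because $d\rho$ is injective with finite-dimensional, hence complemented, image) together with the local uniqueness of the preimage near $\id_H$ (discreteness of $G$ plus your comparability estimate) already suffices, which would let you trim the properness and embedding discussion considerably.
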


\begin{proof}
Since $\phi_0$ is not Euclidean, the group of self-isometries of $(H,\phi_0)$ is discrete.
It follows that (possibly for a smaller neighborhood $U$)
there is a unique continuous family $\{L_\alpha\}_{\alpha\in U}$
of isometries $L_\alpha\co (H,\phi_0)\to (H_\alpha,\phi_\alpha)$ such that $L_0=\id_H$.
We have to prove that this family is smooth.

It suffices to show that the set
$\{L_\alpha\}$ is a 2-dimensional smooth submanifold of $\Lin(H,V)$
and this submanifold is transverse to $\Lin(H,H)$.
Let us show that the maps $L_\alpha$ are solutions
of four equations $\{f_i=0\}_{i=1}^4$
where the functions $f_i\co\Lin(H,V)\to\R$ are
smooth in a neighborhood of $L_0$ and satisfy
\be\label{e:regular}
\textstyle \bigcap_{i=1}^4\ker (d_{L_0}f_i) \cap \Lin(H,H) = \{0\} .
\ee
The desired fact follows from the existence of such $f_i$'s
and the implicit function theorem.

We define the functions $f_i$ by 
\be\label{e:fiL}
f_i(L)=\Phi(L(v_i))-1, \qquad L\in\Lin(H,V), 
\ee
for suitably chosen vectors $v_1,v_2,v_3,v_4\in\ga$.
Such functions satisfy $f_i(L_\alpha)=0$ since 
$L_\alpha$ are isometries.
They are smooth in a neighborhood of $L_0$ since $\Phi$
is smooth outside~0. It remains to prove that one can choose $v_1,v_2,v_3,v_4\in H$
so that the maps $f_i$ defined by \eqref{e:fiL} satisfy~\eqref{e:regular}.

Suppose the contrary. Fix $v_1,v_2,v_3\in\ga$ such that they are pairwise
linearly independent and the lines $\ell_{v_i}$, $i=1,2,3$, are pairwise different.
Let $v_4$ range over~$\ga$. 
If $A\in\Lin(H,H)$ belongs to the kernel of $d_{L_0}f_i$,
then $d_{v_i}\phi_0(A(v_i))=0$
and hence $A(v_i)\in\ell_{v_i}$.
Observe that the conditions $A(v_i)\in\ell_{v_i}$ for $i=1,2,3$
determine a linear map $A$ uniquely up to a scalar factor.
Hence there exists a nonzero $A\in\Lin(H,H)$
such that $d_v\phi_0(A(v))=0$ for all $v=v_4\in\ga$ and hence for all $v\in H$.
This implies that $\phi_0$ is preserved by the flow
generated by the linear vector field $v\mapsto A(v)$ on $H$.
Thus $(H,\phi_0)$ admits a one-parameter group of self-isometries,
a contradiction.
\end{proof}

Let $\{L_\alpha\}$ be the family from Lemma \ref{l:smooth}.
Let $R$ denote the differential of this family at~0,
that is $R\co H^*\to \Lin(H,V)$ is a linear map 
given by $R = d_0(\alpha\mapsto L_\alpha)$.

Let $\alpha\in U$, $h\in H^*$ and $v\in H$.
Since $L_\alpha(H)=H_\alpha$, we have $\pi_\alpha(L_\alpha(v))=0$.
Differentiating this identity with respect to $\alpha$ at $\alpha=0$
in the direction $h$ 
and taking into account \eqref{e:dpialpha} we obtain
\be\label{e:diffLa}
 \pi_0(R(h)(v)) + h(v) = 0 .
\ee
Since $L_\alpha$ is an isometry, we have
$\Phi(L_\alpha(v)) = \phi_0(v)$.
Differentiating this identity
with respect to $\alpha$ at $\alpha=0$
in the direction $h$ yields
\be\label{e:dPhiRhv}
 d_v\Phi(R(h)(v)) = 0
\ee
provided that $v\ne 0$.

\begin{lemma}\label{l:Rhv}
If $h\in H^*$ and $v\in\ker h$, then $R(h)(v)=0$.
\end{lemma}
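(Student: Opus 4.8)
The plan is to combine the two differentiated identities \eqref{e:diffLa} and \eqref{e:dPhiRhv} to pin down the vector $\xi := R(h)(v)\in V$ when $v\in\ker h$. First I would record that, since $v\ne 0$ (the case $v=0$ being trivial), \eqref{e:dPhiRhv} says $d_v\Phi(\xi)=0$, i.e.\ $\xi$ is tangent to the sphere $\Sigma$ at the point $v$; in particular $\xi$ lies in the tangent hyperplane $T_v\Sigma$. Second, the hypothesis $v\in\ker h$ means $h(v)=0$, so \eqref{e:diffLa} collapses to $\pi_0(\xi)=0$, which says $\xi\in H_0=H$. Thus $\xi$ lies in $H\cap T_v\Sigma$.

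Now I claim $H\cap T_v\Sigma=\ell_v$, the tangent line to $\gamma$ at $v$. Indeed $\gamma=\Sigma\cap H$ is a curve in $H$, and its tangent line at $v$ is exactly the set of vectors in $H$ annihilated by $d_v(\Phi|_H)=d_v\Phi|_H$; since $v\in H$, $d_v(\Phi|_H)(\xi)=d_v\Phi(\xi)$ for $\xi\in H$, so $H\cap T_v\Sigma=\{\xi\in H: d_v\Phi(\xi)=0\}=\ell_v$. Hence $\xi\in\ell_v$, i.e.\ $R(h)(v)$ is a scalar multiple of $v$ (using $\ell_v=\R v$ by homogeneity, as $v$ itself is tangent direction--wait, $v$ need not be tangent; rather $\ell_v$ is a specific line through $0$). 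So at this stage we only get $R(h)(v)\in\ell_v$, not yet $R(h)(v)=0$.

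To upgrade $R(h)(v)\in\ell_v$ to $R(h)(v)=0$, I would exploit that $h$ is not determined by $v$: the space $\{h\in H^*: h(v)=0\}$ is one-dimensional, but more to the point I can vary the pair. Fix $v\in\gamma$ and consider $h$ ranging over $\ker(\text{ev}_v)\subset H^*$; the map $h\mapsto R(h)(v)$ is linear into the line $\ell_v$. Alternatively, and more robustly, I would differentiate the isometry relation once more, or use that $L_\alpha$ must also respect the tangent directions: since $L_\alpha$ is a linear isometry $(H,\phi_0)\to(H_\alpha,\phi_\alpha)$, it sends $\ell_v$ (tangent to $\gamma$ at $v$) to $L_\alpha(\ell_v)$, the tangent to $\gamma_\alpha$ at $L_\alpha(v)$. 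Differentiating this compatibility at $\alpha=0$ should force the $\ell_v$-component of $R(h)(v)$ to vanish. The cleanest route: pick $v\in\gamma$ and a nearby $v'\in\gamma$ with $\ell_{v'}\ne\ell_v$ and $h(v)=h(v')=0$ impossible in dimension two unless $v,v'$ are proportional --- so instead I would use that for $v\in\ker h$ fixed, one can also apply the result to the vector $v$ scaled, and compare with the second-order information; I expect the intended short argument is simply that $R(h)(v)\in\ell_v$ and, by a symmetry or a second differentiation of $\Phi(L_\alpha v)=\phi_0(v)$ in a transverse direction, the coefficient is zero.

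The main obstacle is precisely this last step: showing the $\ell_v$-component of $R(h)(v)$ vanishes rather than just that $R(h)(v)\in\ell_v$. I anticipate the paper handles it by noting that $\pi_0\circ R(h)$ is determined by \eqref{e:diffLa} as $-h$, so $R(h)(v)\in H$ exactly when $h(v)=0$, and then using the freedom in choosing a second vector $w$ with $\ell_w\ne\ell_v$ together with linearity of $R$ in $h$: if $h$ annihilates $v$ but $h(w)\ne 0$, then $R(h)(w)\notin H$, and writing $v,w$ as a basis and comparing the $H$-components via a third differentiated relation should close the argument. If that does not suffice directly, the fallback is to differentiate $\Phi(L_\alpha(v))=\phi_0(v)$ a second time to bring in the strictly positive quantity $d^2_v\Phi$ and extract that $R(h)(v)$ must be null for the convex quadratic form, forcing $R(h)(v)=0$.
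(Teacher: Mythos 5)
Your first half is correct and coincides with the paper's: from \eqref{e:diffLa} with $h(v)=0$ you get $R(h)(v)\in\ker\pi_0=H$, and \eqref{e:dPhiRhv} then places $R(h)(v)$ in $H\cap T_v\Sigma=\ell_v$. This is exactly the paper's intermediate statement \eqref{e:Rhv-tangent}. But the actual content of the lemma is the upgrade from $R(h)(v)\in\ell_v$ to $R(h)(v)=0$, and there you stop at a list of speculative routes, none of which is carried out and none of which would close the argument as stated. A telling symptom: none of your proposed fallbacks uses the standing assumption that $\phi_0$ is \emph{not} Euclidean, yet that hypothesis is essential to the conclusion (it is what rules out the nontrivial rotational behaviour that a Euclidean circle would permit). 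Your last fallback, differentiating $\Phi(L_\alpha(v))=\phi_0(v)$ twice, produces a term $d_v\Phi\bigl(R^{(2)}(h,h)(v)\bigr)$ involving the unknown second derivative of the family $\alpha\mapsto L_\alpha$, whose sign you cannot control; so positivity of $d^2_v\Phi$ on $\ell_v$ does not by itself force the tangential component to vanish.

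The paper's actual mechanism is a global argument on the curve $\gamma$, not a pointwise one. It fixes an isomorphism $J\co H\to H^*$ with $v\in\ker J(v)$ and studies the quadratic $H$-valued vector field $Q(v)=R(J(v))(v)$, which by \eqref{e:Rhv-tangent} is tangent to the level curves of $\phi_0$. First, $Q$ must vanish somewhere on $\gamma$: otherwise it would have a constant orientation along $\gamma$, while quadratic homogeneity makes $Q(v)$ and $Q(v')$ positively proportional at antipodal points $v,v'$, forcing opposite orientations --- a contradiction. A quadratic field vanishing on a line factors as $Q=fW$ with $f$ linear and $W$ a linear vector field whose trajectories lie in level sets of $\phi_0$, hence are bounded; a nonzero such $W$ would make those trajectories centered ellipses and $\phi_0$ Euclidean, contradicting the standing assumption. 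Hence $W=0$, $Q=0$, and since any $h$ annihilating $v$ is a multiple of $J(v)$, the lemma follows. This antipodal-orientation step and the factorization of the quadratic field are the missing ideas in your proposal.
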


\begin{proof}
If $v\in\ker h$ then the second term in \eqref{e:diffLa} vanishes
and therefore
$$
 R(h)(v) \in \ker\pi_0 = H .
$$
This and \eqref{e:dPhiRhv} imply that 
\be\label{e:Rhv-tangent}
 R(h)(v)\in\ell_v \quad\text{for all $h\in H^*$ and $v\in\ker h\setminus\{0\}$}.
\ee

Pick a linear isomorphism $J\co H\to H^*$ such that
$v\in\ker J(v)$ for all $v\in H$.
(An example of such $J$ is given by $J(x,y)=(-y,x)$
in coordinates from Remark \ref{r:coord}.)
Define 
$$
Q(v)=R(J(v))(v) .
$$
The map $v\mapsto Q(v)$ is an $H$-valued quadratic form on~$H$.
We regard $Q$ as a vector field on $H$.
By \eqref{e:Rhv-tangent}, $Q$ is tangent to level sets of $\phi_0$.

We are to show that $Q=0$.
First observe that $Q$ vanishes at some point $v_0\in\ga$
(and hence on the entire line generated by $v_0$).
Indeed, if this is not the case then $Q$ has a constant orientation
along $\ga$ (i.e., it is directed either
``clockwise'' or ``counter-clockwise'' everywhere).
On the other hand, for opposite points
$v\in\ga$ and $v'=\frac{-v}{\phi_0(-v)}$ 
the vectors $Q(v)$ and $Q(v')$ are positively proportional
due to quadratic homogeneity of $Q$,
hence they have opposite orientations on~$\ga$, a contradiction.

Since $Q$ is quadratic and has a line of zeroes,
it can be decomposed into a product
$$
 Q(v) = f(v) W(v)
$$
where $f\co H\to\R$ is a linear function and $W$ is a linear vector field.
The trajectories of $W$ are contained in level sets of $\phi_0$
and hence bounded. Since $W$ is linear, it follows that either $W=0$ or
these trajectories are ellipses centered at~0.
The latter contradicts our standing assumption that $\phi_0$ is not a Euclidean norm.
Thus $W=0$ and hence $Q=0$.

Thus $R(J(v))(v)=0$ for all $v\in H$.
To finish the proof of the lemma,
observe that if $h\in H^*$ and $v\in\ker h\setminus\{0\}$ then
$h$ is a scalar multiple of $J(v)$.
\end{proof}

Lemma \ref{l:Rhv} implies the following strong property of $\Phi$
(compare with \eqref{e:dPhi(tau)}).

\begin{lemma}\label{l:Rtau}
There exists a vector $\tau\in V\setminus\{0\}$ 
which is tangent to $\Sigma$ at every point of $\ga=H\cap\Sigma$.
\end{lemma}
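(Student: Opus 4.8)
The plan is to extract the vector $\tau$ from the linear map $R\co H^*\to\Lin(H,V)$ using the fact, established in Lemma \ref{l:Rhv}, that $R(h)(v)=0$ whenever $v\in\ker h$. First I would fix a basis $e_1,e_2$ of $H$ with dual basis $e_1^*,e_2^*$. Applying Lemma \ref{l:Rhv} with $h=e_1^*$ and $v=e_2$ gives $R(e_1^*)(e_2)=0$; symmetrically $R(e_2^*)(e_1)=0$. So the only potentially nonzero values are $R(e_1^*)(e_1)$ and $R(e_2^*)(e_2)$. The next step is to show these two vectors are proportional. To see this, apply Lemma \ref{l:Rhv} to $h=e_1^*-e_2^*$ and $v=e_1+e_2\in\ker h$: by bilinearity of $(h,v)\mapsto R(h)(v)$ we get
\be\label{e:tau-prop}
 0 = R(e_1^*)(e_1) + R(e_1^*)(e_2) - R(e_2^*)(e_1) - R(e_2^*)(e_2) = R(e_1^*)(e_1) - R(e_2^*)(e_2),
\ee
using the two vanishing terms above. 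Hence $R(e_1^*)(e_1)=R(e_2^*)(e_2)=:\tau$, and then $R(h)(v)=h(v)\,\tau$ for all $h\in H^*$, $v\in H$ by bilinearity.

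Having produced $\tau$, I would verify the two assertions of the lemma: that $\tau\ne 0$ and that $\tau$ is tangent to $\Sigma$ along $\ga$. For tangency, take any $v\in\ga$ and choose $h\in H^*$ with $h(v)=1$; then $R(h)(v)=\tau$, and \eqref{e:dPhiRhv} (valid since $v\ne0$) gives $d_v\Phi(\tau)=0$, which is exactly the statement that $\tau$ lies in the tangent hyperplane to $\Sigma$ at $v$. So $\tau$ is tangent to $\Sigma$ at every point of $\ga$, as claimed. It remains to rule out $\tau=0$, i.e.\ $R=0$. If $R=0$ then \eqref{e:diffLa} forces $h(v)=0$ for all $h\in H^*$ and $v\in H$, which is absurd (take $h,v$ with $h(v)=1$). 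Thus $\tau\ne0$, completing the proof.

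The main obstacle is really the bookkeeping that identifies the vector $\tau$ unambiguously, i.e.\ the observation that the bilinear pairing $R(\cdot)(\cdot)$ is forced by Lemma \ref{l:Rhv} to have the rank-one form $h(v)\,\tau$; once that is seen, everything else is immediate from identities \eqref{e:diffLa} and \eqref{e:dPhiRhv} already derived. One small point worth noting for correctness: $R(h)(v)=h(v)\tau$ should be checked to be consistent with \eqref{e:diffLa}, which then reads $\pi_0(\tau)\,h(v)+h(v)=0$, so in fact $\pi_0(\tau)=-1$, reconfirming $\tau\notin H$ — a stronger conclusion than merely $\tau\ne0$, though only $\tau\ne0$ is needed here.
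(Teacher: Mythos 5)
Your proof is correct and follows essentially the same route as the paper: both arguments use Lemma \ref{l:Rhv} to show that the bilinear pairing $(h,v)\mapsto R(h)(v)$ has the rank-one form $h(v)\,\tau$ (the paper does this by decomposing $R$ into components $R_i\in\Lin(H^*,H^*)$ and noting each must be a scalar multiple of the identity, while you evaluate on basis pairs — a cosmetic difference), and then both deduce $\tau\ne 0$ from \eqref{e:diffLa} and tangency from \eqref{e:dPhiRhv}. Your closing observation that $\pi_0(\tau)=-1$ is exactly how the paper itself establishes $\tau\ne 0$.
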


\begin{proof}
Pick a basis $e_1,e_2,e_3$ of $V$ and decompose $R$ into coordinate components:
$R=R_1e_1+R_2e_2+R_3e_3$ where $R_i\in\Lin(H^*,H^*)$ for $i=1,2,3$.
By Lemma \ref{l:Rhv} we have $\ker R_i(h)\supset\ker h$ and hence
$R_i(h)$ is a scalar multiple of~$h$ for every $h\in H^*$.
Therefore $R_i=\lambda_i\id_{H^*}$ for some $\lambda_i\in\R$.
Equivalently, 
\be\label{e:Ri}
 R_i(h)(v)=\lambda_i h(v)
\ee
for all $h\in H^*$ and $v\in H$.
Define $\tau=\lambda_1e_1+\lambda_2e_2+\lambda_3e_3$.
By \eqref{e:Ri} we have
\be\label{e:Rtau}
 R(h)(v) = h(v)\cdot\tau
\ee
for all $h\in H^*$ and $v\in H$.

Let $v\in\ga$ and pick $h\in H^*$ such that $h(v)\ne 0$.
Substituting \eqref{e:Rtau} into \eqref{e:diffLa}
and dividing by $h(v)$ yields that
$\pi_0(\tau)+1=0$,
therefore $\tau\ne 0$.
Substituting \eqref{e:Rtau} into \eqref{e:dPhiRhv}
and dividing by $h(v)$ yields $d_v\Phi(\tau)=0$,
hence $\tau$ is tangent to $\Sigma$ at~$v$.
\end{proof}

The statement of Lemma \ref{l:Rtau} is equivalent to the property
that there exists a norm non-increasing linear projector from $V$ to $H$.
If this property held for {\em all} planes $H\subset V$,
then the Blaschke-Kakutani characterization of ellipsoids
(\cite{Kak}, see also \cite[\S3.4]{Th} or \cite[\S12.3]{Gruber})
would imply that $\Phi$ is a Euclidean norm.
The next proposition generalizes the Blaschke-Kakutani characterization
to our localized setting.
It is independent of the previous arguments.

\begin{prop}\label{p:kakutani}
Let $(V^3,\Phi)$ be a Banach-Minkowski space
and $\Sigma$ its unit sphere.
Suppose that $\U\subset\Gr_2(V)$ is an open set such that
for every $H\in\U$ there exists a vector $\tau=\tau_H\in V\setminus\{0\}$
which is tangent to $\Sigma$ at every point of $H\cap\Sigma$.
Then $\Phi|_H$ is a Euclidean norm for every $H\in\U$.
\end{prop}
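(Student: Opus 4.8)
The plan is to reduce the statement to the classical Blaschke--Kakutani theorem by showing that the hypothesis, which a priori concerns only the open set $\U$ of planes, actually propagates to \emph{all} planes through the origin. The key observation is that the existence of a vector $\tau_H$ tangent to $\Sigma$ along $H\cap\Sigma$ is equivalent to the existence of a norm-nonincreasing linear projection $P_H\co V\to H$ (namely the projection with kernel $\R\tau_H$): indeed, if $\tau_H$ is tangent to $\Sigma$ at every $v\in H\cap\Sigma$, then for such $v$ the supporting hyperplane of the unit ball at $v$ contains $v+\R\tau_H$, so the slab between the two supporting hyperplanes of $B$ parallel to $H$ at $\pm(H\cap\Sigma)$ is invariant under translation by $\tau_H$; projecting along $\tau_H$ therefore maps $B$ into $B\cap H$, which says exactly that $\|P_H\|\le 1$. (This is the standard reformulation used in the Blaschke--Kakutani proof, and in dimension $3$ one can also argue via the classical fact that a $2$-dimensional normed space in which a given line admits norm-one projections from a $3$-dimensional overspace must be Euclidean — but it is cleaner to go through propagation.)

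The heart of the argument is the following propagation step. Fix $H_0\in\U$ with its projection $P_0=P_{H_0}$, $\|P_0\|\le 1$, and kernel $\R\tau_0$. I claim that every plane $H$ parallel to $H_0$ — equivalently, every plane $H\in\Gr_2(V)$ with $\tau_0\notin H$... no: rather, every plane $H$ such that the projection along $\tau_0$ restricts to an isomorphism $H\to H_0$ — also admits a norm-one projection, hence by the local part already proved (Lemma \ref{l:Rtau} applied in a neighborhood, together with the conclusion we are proving by induction on an open cover) ends up Euclidean. More directly: the slab $\{|\pi_0|\le 1\}$ between the supporting hyperplanes of $B$ parallel to $H_0$ is invariant under translation by $\tau_0$; since $B$ lies in this slab, every cross-section $H\cap B$ with $H$ transverse to $\tau_0$ is, after projecting along $\tau_0$ onto $H_0$, sandwiched between $P_0(H\cap B)$... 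Here I would instead invoke that $(H_0,\Phi|_{H_0})$, once known to be Euclidean, forces a genuine inner product structure, and then use the following: a Banach--Minkowski $3$-space one of whose $2$-planes is Euclidean and which admits a norm-one projection onto that plane need not be Euclidean — so pure propagation by parallelism is not enough, and one must use that \emph{an open set} of planes has this property.

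So the correct route, which I would actually carry out, is: (1) translate the tangency hypothesis into ``$\|P_H\|\le 1$ for all $H\in\U$''; (2) invoke the \emph{local} Blaschke--Kakutani argument of Gruber/Thompson adapted as in Kakutani's original proof — the original proof of Kakutani shows that if every $2$-plane through $0$ admits a norm-one projection then $\Phi$ is Euclidean, by integrating over the Grassmannian or by an averaging/John-ellipsoid argument; the point is that this proof is \emph{local} in character once one has norm-one projections for an open set of planes, because it produces, near $H_0$, a smooth field of projections whose associated quadratic forms patch together into a single positive-definite form. Concretely, for each $H\in\U$ let $q_H$ be the unique ellipsoid of maximal volume (John ellipsoid) inscribed in $B\cap H$; the norm-one projection forces $q_H$ to be the restriction of a single ambient ellipsoid, by the rigidity part of John's theorem together with the observation that the projections $P_H$ are mutually compatible on the overlaps of their images. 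Then the field $H\mapsto q_H$ defines, near $H_0$, a constant ambient inner product, so $\Phi|_{H_0}$ is Euclidean.

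\textbf{The main obstacle} I anticipate is precisely making the last patching rigorous without circular reasoning: showing that the (a priori plane-dependent) John ellipsoids $q_H$, for $H$ ranging over a small open set, are the slices of \emph{one} ambient quadric. The clean way is to differentiate: parametrize planes near $H_0$ as in Remark \ref{r:coord}, show $\|P_{H_\alpha}\|\le 1$ for all small $\alpha$, write the norm-one condition as the statement that the second-order contact of $\Sigma$ with an ambient ellipsoid is maintained in an open family of directions, and conclude via a dimension count / analyticity-type argument that the contact ellipsoid is locally independent of $\alpha$. Alternatively — and this is likely what the paper does — one reduces to Kakutani's theorem on a suitable lower-dimensional slice: if $\U$ contains all planes through a fixed line $\ell\subset H_0$, then the $2$-dimensional ``pencil'' of sections through $\ell$ satisfies the hypotheses of a $2$-dimensional version, and varying $\ell$ covers enough of $\Gr_2(V)$; but since $\U$ is merely open, one must instead run Kakutani's averaging argument with the measure supported on $\U$, and the obstruction becomes checking that the resulting averaged form is the restriction of an ambient form on each individual $H\in\U$ — which again is the patching issue above. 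I would therefore spend the bulk of the write-up on this compatibility/rigidity lemma and treat the reduction to John's theorem (or to a direct convexity computation) as the routine remainder.
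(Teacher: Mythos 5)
Your write-up is not a proof: the step you yourself flag as ``the main obstacle'' --- showing that the plane-by-plane data (norm-one projections, or John ellipsoids $q_H$) patch into a single ambient quadratic structure --- is exactly the content of the proposition, and none of the routes you sketch carries it out. The propagation-by-parallelism attempt is abandoned mid-sentence; the John-ellipsoid/averaging route is asserted to work ``by the rigidity part of John's theorem together with the observation that the projections $P_H$ are mutually compatible on the overlaps,'' but that compatibility is precisely what must be proved and is the whole difficulty. Kakutani's original argument is genuinely global --- it uses all planes through a fixed line, which an open $\U$ need not contain --- so it does not localize for free. Note also that the conclusion here is only that each $\Phi|_H$ is Euclidean, not that $\Phi$ is, so a reduction to the full Blaschke--Kakutani theorem is both unavailable and stronger than needed.

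The paper's actual mechanism is different and supplies exactly the missing compatibility step. First one checks, using that $\U$ is open (so each unit vector $v$ lies in two distinct planes of $\U$, giving two linearly independent vectors $\tau_H,\tau_{H_1}$ in the tangent plane at $v$), that the tangent planes to $\Sigma$ at $v$ and at the opposite unit vector are parallel. This makes the map $f$ sending a line $\ell$ to the direction of the tangent plane to $\Sigma$ at $\ell\cap\Sigma$ a well-defined diffeomorphism from an open subset of $\Gr_1(V)$ onto an open subset of $\Gr_2(V)$, i.e.\ between two projective planes. The hypothesis says precisely that $f$ maps each line of $\Gr_1(V)$ corresponding to a plane $H\in\U$ to a line in $\Gr_2(V)$ (the pencil of planes containing $\tau_H$), so by the local fundamental theorem of projective geometry $f$ is induced by a linear map $F\co V\to V^*$. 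Restricting $F$ to a single $H\in\U$ then yields a nontrivial linear vector field on $H$ to which $H\cap\Sigma$ is everywhere tangent; a bounded orbit of a linear flow is an ellipse centered at $0$, so $\Phi|_H$ is Euclidean. That projective-rigidity step, not an averaging or John-ellipsoid argument, is what replaces the global structure of the classical proof.
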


\begin{proof}
Let $v\in H\cap\Sigma$ for some $H\in\U$ and let $v'=-v/\Phi(-v)$
be the opposite $\Phi$-unit vector. Since $\U$ is open,
there exists $H_1\in\U$ such that $v\in H_1$ and $H_1\ne H$.
The strict convexity of $\Phi$ implies that
$\tau_H$ and $\tau_{H_1}$ are linearly independent vectors.
Both tangent planes to $\Sigma$ at $v$ and $v'$ contain
these two vectors, hence these two tangent
planes are parallel.

Let $U_1=\bigcup_{H\in\U} \Gr_1(H)$,
then $U_1$ is an open subset of $\Gr_1(V)$.
Define a map $f\co U_1\to\Gr_2(V)$ as follows.
For a line $\ell\in U$ let $f(\ell)$ be the direction of the
tangent plane to $\Sigma$ at a point $v\in\ell\cap\Sigma$.
By the above argument, this direction is the same for $v$
and the opposite vector $v'$, thus $f$ is well-defined.
The strict convexity of $\Phi$ implies that $f$ is a diffeomorphism
from $U_1$ onto an open subset of $\Gr_2(V)$.

We regard the Grassmannians $\Gr_1(V)$ and $\Gr_2(V)$ as projective planes.
Namely $\Gr_1(V)$ is the projectivization of $V$ and $\Gr_2(V)$ is naturally
identified with the projectivization of~$V^*$:
to each linear function from $V^*\setminus\{0\}$ one associates its kernel.
Each plane $H\in\U$ represents a line in $\Gr_1(V)$, and its $f$-image is
a line in $\Gr_2(V)$ consisting of all planes that contain $\tau_H$.

Thus $f$ is a diffeomorphism between subsets of projective planes,
it is defined on the union of an open set of lines,
and it maps each of these lines to a line.
By the local variant of the fundamental theorem of projective geometry,
these properties imply that $f$ is a restriction of a projective map.
(In fact, it suffices to assume that there are 4 independent families of lines 
that are mapped to lines, see~\cite{Pre}.) Thus there exists a linear map $F\co V\to V^*$
such that $f$ is a restriction of the projectivization of~$F$.

Fix $H\in\U$ and let $\ga=H\cap\Sigma$.
Let $F_H\co H\to H^*$ be the map given by
$F_H(v)=F(v)|_H$ for all $v\in H$.
By construction of $F$ for every $v\in\ga$ 
the line $\ker F_H(v)$ is parallel to the tangent line to $\ga$ at~$v$.
Let $J\co H^*\to H$ be a linear isomorphism which sends every $\alpha\in H^*$
to a vector from its kernel (cf.\ the proof of Lemma \ref{l:Rhv}).
Then the map $W=J\circ F_H\co H\to H$ defines
a nontrivial linear vector field on $H$ and $\ga$ is tangent to this vector field.
As in the proof of Lemma \ref{l:Rhv}, it follows that $\ga$ is an ellipse centered at 0
and hence $\Phi|_H$ is a Euclidean norm.
\end{proof}

Now we finish the proof of Proposition~\ref{p:sections}
and deduce Theorem~\ref{t:mono}.

\begin{proof}[\bf Proof of Proposition \ref{p:sections}]
Let $\U\subset\Gr_2(V)$ be as in Proposition~\ref{p:sections}.
In the set-up preceding Lemma \ref{l:Rtau},
$H$ was an arbitrary plane from $\U$, hence
Lemma \ref{l:Rtau} applies to all planes $H\in\U$.
Thus the assumptions of Proposition~\ref{p:kakutani}
are satisfied and it implies that $\Phi|_H$
is a Euclidean norm for every $H\in\U$.
\end{proof}

\begin{proof}[\bf Proof of Theorem \ref{t:mono}]
Let $M=(M^2,\phi)$ be a monochromatic Finsler manifold,
$V=(V^3,\Phi)$ a Banach-Minkowski space, and $f\co M\to V$ an isometric embedding.
Let $G\co M\to\Gr_2(V)$ be the Gauss map of $f$
defined by $G(x)=\operatorname{Im} d_xf$ for $x\in M$.
If the second fundamental form of $f$ is non-degenerate at $p\in M$,
then the derivative of $G$ at $p$ is non-degenerate and
hence the image of $G$ contains a neighborhood of $G(p)$ in $\Gr_2(V)$.
Since $\phi$ is monochromatic, this implies that that the restrictions of $\Phi$
to all planes from this neighborhood are isometric.
By Proposition \ref{p:sections} this implies that
the norm $\phi|_{T_pM}$ is Euclidean and hence $M$ is Riemannian.
\end{proof}

\begin{remark}\label{rem:converse}
Conversely, Theorem \ref{t:mono} easily implies Proposition \ref{p:sections}.
Indeed, if $(V,\Phi)$ and $\U$ are as in Proposition \ref{p:sections}
then for any surface $M\subset V$ whose tangent planes all belong to $\U$,
the induced Finsler metric on $M$ is monochromatic.
Suppose that Proposition \ref{p:sections} fails and let
$x,y,z$ be coordinates in $V$ such that $\U$ contains the plane $\{z=0\}$.
Then a small neighborhood of $0$ in the surface $\{z=x^2+y^2\}$
is a counter-example to Theorem~\ref{t:mono}.
\end{remark}

\section{Norms with many Euclidean sections}
\label{sec:riem}

In this section we finish the proof of Theorem \ref{t:sections}'
and consider isometric embeddings of Riemannian
2-manifolds into 3-dimensional Banach-Minkowski spaces.
It turns out that they are essentially no different
from isometric embeddings into Euclidean spaces,
see Proposition \ref{p:riemannian}.
Both results are based on the following lemma
which localizes the well-known fact that
a normed space is Euclidean if all its 2-dimensional
subspaces are.

\begin{lemma}\label{l:quadratic}
Let $V=(V^3,\Phi)$ be a Banach-Minkwoski space
and $\Gamma\subset\Gr_2(V)$ be a set of planes
such that the set $\bigcup\Gamma:=\bigcup_{H\in\Gamma}H\subset V$
has a nonempty interior.
Suppose that for every $H\in\Gamma$ the norm $\Phi|_H$ is Euclidean.
Then there exists a Euclidean norm on $V$ whose restriction
to every plane from $\Gamma$ coincides with the restriction of $\Phi$.
\end{lemma}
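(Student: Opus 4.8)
The plan is to work with the function $f=\Phi^2$, which is smooth and positively homogeneous of degree $2$ on $V\setminus\{0\}$, and to show that it coincides with a single quadratic form on the open set $\Omega:=\operatorname{int}\big(\bigcup\Gamma\big)$. For $H\in\Gamma$ the hypothesis that $\Phi|_H$ is Euclidean means exactly that $f|_H$ is a positive-definite quadratic form $B_H$. The key reduction is that it suffices to prove the Hessian $d^2_v f$ (a symmetric bilinear form on $V$) is the same for all $v$ in a fixed connected component $\Omega_0$ of $\Omega$. Indeed, if $d^2_v f\equiv 2Q$ there, then $f-Q$ has vanishing Hessian, hence is affine on $\Omega_0$; being also positively homogeneous of degree $2$ it must vanish, so $f=Q$ on $\Omega_0$. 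The form $Q=\tfrac12 d^2_{v_0}f$ is positive definite because $\Phi$ is Banach--Minkowski (quadratic strict convexity of $\Phi^2$), so $q:=\sqrt Q$ is a genuine Euclidean norm.

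The heart of the matter is therefore to show that the third derivative $C:=d^3_v f$ vanishes for every $v\in\Omega$. Two structural constraints are available at each such $v$. First, since $d^2 f$ is homogeneous of degree $0$, Euler's relation gives $C(v,\cdot,\cdot)=0$; thus $C$ descends to a cubic form $\bar C$ on the two-dimensional quotient $V/\langle v\rangle$. Second, for every $H\in\Gamma$ with $v\in H$ the equality $f|_H=B_H$ forces all third directional derivatives of $f$ along $H$ to vanish, i.e. $C|_{H\times H\times H}=0$, which says that $\bar C$ vanishes in the direction $H/\langle v\rangle$. A nonzero binary cubic has at most three zero directions, so $\bar C=0$ as soon as at least four planes of $\Gamma$ --- in particular a whole one-parameter family --- pass through $v$. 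This is automatic when $\Gamma$ is open in $\Gr_2(V)$, which is precisely the situation in which the lemma is applied after Proposition \ref{p:sections}; there the binary-cubic count alone yields $C\equiv 0$ on $\Omega$.

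The step I expect to be the main obstacle is the degenerate case in which only a few planes of $\Gamma$ pass through a generic point of $\Omega$, the extreme being a pencil of planes through a common line that nevertheless sweeps an open set. Here the pointwise count is insufficient, and smoothness of $\Phi$ must be used in an essential way --- some strict convexity or smoothness hypothesis is genuinely needed, as the non-strictly-convex examples in the introduction show. The mechanism I would exploit is that at a point where the in-plane coordinates degenerate (e.g. along the common line of a pencil), the positively homogeneous components of the jet of the smooth function $f$ must \emph{separately} be smooth; for a pencil this already forces the restrictions $f|_{H_s}$ to fit together into a quadratic polynomial. In the general thin case I would differentiate the relation $f|_{H_s}=B_s$ as the plane $H_s$ rotates and combine this with $C|_{H_s\times H_s\times H_s}=0$ and the smoothness of $f$ on the three-dimensional set $\Omega$ to kill the remaining components of $\bar C$.

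Finally, to pass from $f=Q$ on $\Omega_0$ to the conclusion $q|_H=\Phi|_H$ for every $H\in\Gamma$: for any $H\in\Gamma$ meeting $\Omega_0$ the two quadratic forms $Q|_H$ and $f|_H=B_H$ agree on the nonempty open set $H\cap\Omega_0$, hence coincide, so $f=Q$ on all of $H$. Thus the identity $f=Q$ propagates from $\Omega_0$ to every plane meeting it, and then across intersecting planes --- the value of $Q$ on a line $H\cap H'$ is pinned down once $f=Q$ is known on $H'$, and three distinct such lines determine the binary quadratic form $Q|_H$. When $\bigcup\Gamma$ is connected this reaches all of $\Gamma$; in particular it does so in the open case of interest, where every plane of $\Gamma$ meets $\Omega$. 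This yields $Q|_H=\Phi^2|_H$ for all $H\in\Gamma$, completing the proof.
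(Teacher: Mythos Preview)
Your third-derivative approach is genuinely different from the paper's and is clean when $\Gamma$ is open in $\Gr_2(V)$: the binary-cubic count does kill $d^3_v(\Phi^2)$ at every $v\in\Omega$ once four planes of $\Gamma$ pass through $v$, and the passage from $d^3 f\equiv 0$ to $f=Q$ via degree-$2$ homogeneity is correct. This already covers the application after Proposition~\ref{p:sections}.

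The lemma, however, is stated for arbitrary $\Gamma$ with $\operatorname{int}\big(\bigcup\Gamma\big)\ne\emptyset$, and there the gap you flag is real and your sketch does not close it. For a one-parameter family with no three planes sharing a line, a generic $v\in\Omega$ lies on a \emph{single} plane of $\Gamma$, so the cubic count yields only one zero direction of $\bar C$. ``Differentiating $f|_{H_s}=B_s$ as $H_s$ rotates'' probes $d^3 f$ in transverse directions only after you control how $B_s$ varies with $s$, and nothing in your setup does that. The pencil case is likewise not handled by your argument as written: the common line typically lies on $\partial\Omega$, not in $\Omega$, so you never evaluate the Hessian there. Your propagation step at the end also relies on a connectivity hypothesis on $\Gamma$ that the lemma does not assume.

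The paper proceeds quite differently and covers both regimes. If some line $\ell$ lies in at least three planes of $\Gamma$, it sets $Q=\tfrac12\, d^2_v(\Phi^2)$ at any $v\in\ell\setminus\{0\}$; then $Q|_H=\Phi^2|_H$ automatically for every $H\in\Gamma$ containing $\ell$, and any other $H'\in\Gamma$ meets these in three distinct lines, which pins down $\Phi^2|_{H'}=Q|_{H'}$. If no such line exists, the paper works on an affine slice $H_0+v_0$ and uses the triangle inequality for $\Phi$ to show that $v\mapsto\Phi^2(v)-\Phi^2(v-v_0)$ has at most linear growth, hence is an affine function on each line $H\cap(H_0+v_0)$; a single quadratic $Q$ is then reconstructed from two such lines and extended to all of $\Gamma$. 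Neither device is suggested by your framework.
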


\begin{remark}
Without the assumption that $\bigcup\Gamma$ has a nonempty interior
a weaker conclusion holds: There exists a quadratic form $Q$ on $V$
(not necessarily positive definite)
such that $Q|_H=\Phi^2|_H$ for every plane $H\in\Gamma$. 
This can be seen from the proof of Lemma \ref{l:quadratic}.
\end{remark}

\begin{proof}[Proof of Lemma \ref{l:quadratic}]
Consider two cases.

\textit{Case 1: 
There is a line $\ell\in\Gr_1(V)$ contained in at least 3 planes from $\Gamma$.}
Fix such a line $\ell$ and
divide $\Gamma$ into subsets $\Gamma_0$ and $\Gamma_1$
where the planes from $\Gamma_0$ contain~$\ell$
and those from $\Gamma_1$ do not.
Pick a vector $v\in\ell\setminus\{0\}$
and define a quadratic form $Q$ on $V$ by $Q(x)=\frac12 d^2_v(\Phi^2)(x,x)$
for all $x\in V$.
Since $\Phi^2$ is strictly convex, $Q$ is positive definite
and hence it is a square of some Euclidean norm.
If $\Phi$ itself is Euclidean then $Q=\Phi^2$.
This observation applied to the restriction of $\Phi$ to
a plane $H\in\Gamma_0$ implies that $Q|_H=\Phi^2|_H$ for every such plane.

Now consider a plane $H\in\Gamma_1$. The restriction of $\Phi^2$ to $H$
is a quadratic form which equals $Q$ on at least 3 lines through the origin
(these lines are the intersections of $H$ with the planes from $\Gamma_0$).
Since a quadratic form on the plane is uniquely determined by its values
at three pairwise linearly independent vectors, it follows that
$\Phi^2|_H=Q|_H$.
This proves the lemma under the assumption of Case~1.

\smallbreak
\textit{Case 2: No three planes from $\Gamma$ have a common line.}
Pick $H_0\in\Gamma$ and a vector $v_0\in V\setminus H_0$
such that $\Phi(v_0)\le 1$ and $\Phi(-v_0)\le 1$.
Consider the affine plane $H_1=H_0+v_0$.
Every plane from $\Gamma$ except $H_0$ intersects $H_1$ by an affine line;
let $\Lambda$ denote the set of all such lines.
Since $\Phi$ is a Euclidean norm on every plane from~$\Gamma$,
$\Phi^2$ is a quadratic polynomial on every line from $\Lambda$.
By the assumption of Case~2, no two lines from $\Lambda$ are parallel
and no three of them have a common point.

Define a function $F\co H_1\to\R$ by
$$
 F(v) = \Phi^2(v)-\Phi^2(v-v_0) , \qquad v\in H_1.
$$
The restriction of $F$ to any line from $\Lambda$ is
the difference of two quadratic polynomials
and hence a polynomial of degree at most~2.
In fact, its degree is no greater than~1.
Indeed, the triangle inequality for $\Phi$ implies that for every $v\in H_1$,
$$
 |\Phi(v)-\Phi(v-v_0)| \le \max\{\Phi(v_0),\Phi(-v_0)\} \le 1 ,
$$
and therefore
$$
 |F(v)| = |\Phi(v)-\Phi(v-v_0)|\cdot |\Phi(v)+\Phi(v-v_0)| \le 2\Phi(v)+1 .
$$
This implies that $|F|$ has at most linear growth at infinity,
hence its restriction to a line cannot be a degree 2 polynomial.

Thus the restriction of $F$ to any line from $\Lambda$ is an affine function.
Pick two lines $\ell_1,\ell_2\in\Lambda$. Since they are not parallel, there
exist an affine function $\tilde F\co H_1\to\R$ such that
$\tilde F|_{\ell_1\cup\ell_2}=F|_{\ell_1\cup\ell_2}$.
Every line $\ell\in\Lambda\setminus\{\ell_1,\ell_2\}$ intersects $\ell_1\cup\ell_2$
at two distinct points. An affine function on $\ell$ is uniquely determined
by its values at these two points, hence $\tilde F|_\ell=F|_\ell$.
Since $\Phi^2|_{H_0}$ is a quadratic form and $\tilde F$ is affine,
there is a quadratic form $Q\co V\to\R$ such that $Q|_{H_0}=\Phi^2|_{H_0}$
and $Q(v) = Q(v-v_0)+\tilde F(v)$ for all $v\in H_1$.
If $v\in\ell\in\Lambda$ then
$Q(v) = \Phi^2(v-v_0)+F(v) = \Phi^2(v)$
since $\tilde F|_\ell=F|_\ell$ as shown above.
By homogeneity it follows that $Q=\Phi^2$ on $\bigcup\Gamma$.

It remains to prove that $Q$ is positive definite.
Suppose the contrary and choose
$v,w\in V\setminus\{0\}$ such that $v$ is an interior point of $\bigcup\Gamma$
and $Q(w)\le 0$.
Then 
$$
d^2_v(\Phi^2)(w,w)=(d^2_vQ)(w,w)=2Q(w)\le 0,
$$
contrary to the strict convexity of $\Phi^2$.
This finishes the proof of Lemma \ref{l:quadratic}.
\end{proof}

\begin{proof}[\bf Proof of Theorem \ref{t:sections}']
Let $(V,\Phi)$ and $\U$ be as in Theorem \ref{t:sections}',
and let $n=\dim V$.
First consider the case $n=3$.
By Proposition \ref{p:sections} the norm $\Phi|_{H}$
is Euclidean for every $H\in\U$.
Since $\U$ is an open set of planes, $\bigcup\U\setminus\{0\}$ 
is an open set in $V$.
Hence Lemma~\ref{l:quadratic} applied to $\Gamma=\U$
implies the last claim of the theorem.
This proves the theorem in dimension~3.

If $n>3$, one can apply this to any
3-dimensional subspace of $V$ containing at least one plane from $\U$.
This implies that $\Phi|_{H}$
is a Euclidean norm for every $H\in\U$. It remains to prove that there is
a Euclidean norm on $V$ extending all these two-dimensional norms.
Fix $v\in\bigcup\U\setminus\{0\}$ and apply the 3-dimensional case 
to all 3-dimensional subspaces containing $v$ and a fixed plane $H\in\U$ containing~$v$.
This yields a Euclidean norm on each of these subspaces.
Similarly
to the proof of Lemma \ref{l:quadratic}, the squares of these norms
are restrictions of the quadratic form $Q_v$ defined by
$Q_v(x)=\frac12d^2_v(\Phi^2)(x,x)$, $x\in V$.
Thus $\Phi^2=Q_v$ in a neighborhood of $v$.
A quadratic form $Q_v$ with this property obviously cannot change
if $v$ varies continuously. Since $\U$ is connected, it follows
that $Q_v$ is the same quadratic form for all $v\in\bigcup\U\setminus\{0\}$.
The square root of this quadratic form is the desired Euclidean norm on~$V$.
\end{proof}

Now we return to isometric embeddings.

\begin{prop}\label{p:riemannian}
Let $M$ be a connected Riemannian 2-manifold, $V=(V^3,\Phi)$
a Banach-Minkowski space, and $f\co M\to V$ an isometric immersion.
Then there exists a Euclidean norm on $V$ such that $f$ is isometric
with respect to this norm.
\end{prop}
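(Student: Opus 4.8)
The plan is to reduce the statement to Lemma \ref{l:quadratic} by producing a family of planes $\Gamma$ covering an open subset of $V$ on which $\Phi$ restricts to Euclidean norms. The natural candidate for $\Gamma$ is the image of the Gauss map $G\co M\to\Gr_2(V)$, $G(x)=\operatorname{Im}d_xf$, composed with the observation that each tangent plane inherits a Riemannian---hence Euclidean---norm from the ambient $\Phi$. Concretely, first I would pass to a small coordinate neighborhood $U$ of an arbitrary point $p\in M$; since the conclusion is about the existence of a global Euclidean norm and $M$ is connected, it suffices to prove the local statement and then glue. So assume $M=U\subset\R^2$ and $f\co U\to V$ is an isometric embedding of a Riemannian surface.

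The key point is to engineer the situation so that the planes $G(x)$, $x\in U$, sweep out a set with nonempty interior in $V$. This will fail precisely when $f(U)$ is (a piece of) a ruled developable surface, i.e.\ when the second fundamental form is everywhere degenerate; so I would split into cases according to the rank of the second fundamental form. \emph{Case 1: the second fundamental form is nondegenerate at some point $q\in U$.} Then the Gauss map $G$ is a local diffeomorphism near $q$, so $\{G(x):x\text{ near }q\}$ is an open set $\U\subset\Gr_2(V)$, and $\bigcup_{H\in\U}H$ has nonempty interior. For each such $H$, the restriction $\Phi|_H$ is the norm of the Riemannian metric induced on the corresponding tangent plane, hence Euclidean. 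Lemma \ref{l:quadratic} (applied with $\Gamma=\U$) then yields a Euclidean norm on $V$ agreeing with $\Phi$ on every plane of $\U$, in particular on a full neighborhood of $f(q)$; thus $\Phi^2$ coincides with a positive definite quadratic form near $f(q)$, and by the connectedness argument at the end of the proof of Theorem \ref{t:sections}' this quadratic form is forced to be the same everywhere $f$ reaches, giving the desired Euclidean norm globally.

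\emph{Case 2: the second fundamental form of $f$ is degenerate at every point of $U$.} Here I would invoke Theorem \ref{t:mono}---or rather its logical content via Proposition \ref{p:sections}. Actually the cleaner route: a surface in $V$ with everywhere-degenerate second fundamental form is ruled by line segments along which the tangent plane is constant, so $f(U)$ is a developable surface and its intrinsic metric has vanishing Gaussian curvature. But a Riemannian metric of zero curvature is flat, hence (locally) isometric to a piece of the Euclidean plane, which embeds isometrically as a plane in $V$ with any Euclidean structure on $V$ restricting correctly; more to the point, a flat Riemannian metric is monochromatic, so if such an $f$ existed with $\Phi$ non-Euclidean on the tangent planes it would still be consistent---so in this case I must argue directly that $\Phi$ restricted to the (single, up to the ruling) family of tangent planes is Euclidean, which follows because the induced metric is Riemannian and Riemannian norms are Euclidean. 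Then one extends: along a ruling line the tangent plane is fixed, and as one moves transversally the tangent planes sweep a one-parameter family; Lemma \ref{l:quadratic} still applies once one checks $\bigcup\Gamma$ has nonempty interior, which holds unless $f(U)$ is contained in a single plane---in which case the statement is trivial since $\Phi$ restricted to that plane is Euclidean and extends to any Euclidean norm on $V$.

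The main obstacle is Case 2: when $\bigcup_{x}G(x)$ is too thin to apply Lemma \ref{l:quadratic} directly, one must argue that the two-parameter worth of tangent lines inside the developable surface still fills out enough directions, or else handle the genuinely degenerate sub-cases (planar image, conical image, cylindrical image) separately. In each such sub-case the tangent planes, though not sweeping an open set of $\Gr_2(V)$, do contain an open set of lines through $0$, so one can still feed a suitable $\Gamma$ into Lemma \ref{l:quadratic}; verifying that the resulting quadratic form is positive definite and globally consistent is the routine-but-careful part. I expect the whole argument to hinge on the dichotomy nondegenerate/degenerate second fundamental form, with Lemma \ref{l:quadratic} doing the real work on the open side and a short developable-surface analysis plus the connectedness-of-$M$ gluing finishing the degenerate side.
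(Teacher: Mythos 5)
Your overall strategy---feed $\Gamma=\{\operatorname{Im}d_xf:x\in M\}$ into Lemma~\ref{l:quadratic}---is exactly the paper's, and your Case~1 is fine. The gap is Case~2, which you yourself flag as ``the main obstacle'' and do not close. Two concrete problems there. First, the step ``the second fundamental form is everywhere degenerate, so $f(M)$ is developable and the intrinsic metric has vanishing Gaussian curvature'' is circular: Theorema Egregium relates the second fundamental form to the intrinsic curvature only for a \emph{Euclidean} ambient norm, and whether $\Phi$ can be replaced by a Euclidean norm is precisely what you are trying to prove (the whole point of the paper is that such implications are not automatic in Banach--Minkowski spaces). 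Second, the subsequent discussion of ruled/conical/cylindrical subcases and of ``an open set of lines through $0$'' never actually verifies the hypothesis of Lemma~\ref{l:quadratic}, and the appeal to Theorem~\ref{t:mono} is irrelevant here since the metric is Riemannian.

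The missing observation that makes all of this unnecessary: Lemma~\ref{l:quadratic} only requires $\bigcup\Gamma$ to have nonempty interior \emph{in $V$}, not that $\Gamma$ be open in $\Gr_2(V)$. So the right dichotomy is not degenerate versus nondegenerate but $S\equiv 0$ versus $S\not\equiv 0$. If $S\equiv 0$ everywhere, $\Gamma$ is a single plane and the claim is trivial. If $S(\cdot,v)\ne 0$ at some $p$ for some $v\in T_pM$, consider the evaluation map $E(x,u)=d_xf(u)$ from (a coordinate chart of) $M\times\R^2$ to $V$, whose image is $\bigcup\Gamma$. Its differential at $(p,v)$ contains $H=\operatorname{Im}d_pf$ (from varying $u$) together with the vectors $d^2_pf(w,v)$, $w\in\R^2$, whose classes modulo $H$ are the nonzero values $S(w,v)\in V/H$; hence $E$ is a submersion at $(p,v)$ and $d_pf(v)$ is an interior point of $\bigcup\Gamma$. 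A rank-one (degenerate but nonzero) second fundamental form already suffices. With this one line the entire Case~2 analysis disappears, and Lemma~\ref{l:quadratic} applies directly whenever $f(M)$ is not planar; this is exactly how the paper's two-line proof goes.
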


\begin{proof}
Define $\Gamma\subset\Gr_2(V)$ by $\Gamma=\{\operatorname{Im} d_p f : p\in M\}$.
For every plane $H\in\Gamma$ the restriction $\Phi|_H$ is a Euclidean norm,
and we need to prove that there is a Euclidean norm on $V$ extending all these
two-dimensional norms.

If the second fundamental form of $f$ vanishes everywhere, then $\Gamma$
consists of one plane and the result is trivial.
If the second fundamental form at some point $p\in M$ and some vector $v\in T_pM$
is nonzero, then $d_pf(v)$ is an interior point of $\bigcup\Gamma$. In this case the
result follows from Lemma \ref{l:quadratic}.
\end{proof}

The next corollary asserts that the answer to Question \ref{main-question} is
affirmative for all monochromatic metrics.

\begin{cor}\label{cor:mono}
Let $M^2$ be a monochromatic (possibly Riemannian)
Finsler manifold, $p\in M$, and
$f_i\co M\to V_i$, $i=1,2$, smooth isometric embeddings
where $V_1$ and $V_2$ are 3-dimensional Banach-Minkowski spaces.
Then the second fundamental forms of $f_i$ at $p$
are of the same type: either both are definite, or both are semi-definite,
or both are degenerate.
\end{cor}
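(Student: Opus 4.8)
The plan is to split into two cases according to whether $M$ is Riemannian or not, using Theorem \ref{t:mono} to dispose of the non-Riemannian case and Proposition \ref{p:riemannian} together with Gauss' Theorema Egregium for the Riemannian case. If $M$ is monochromatic but \emph{not} Riemannian, then Theorem \ref{t:mono} applies to each embedding $f_i\co M\to V_i$ separately and tells us that the second fundamental form of each $f_i$ is degenerate at \emph{every} point, in particular at $p$. Hence both forms are degenerate and there is nothing more to check.

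The remaining case is when $M$ is Riemannian; write $\phi=\sqrt g$ for the metric tensor $g$. Here I would invoke Proposition \ref{p:riemannian}: for each $i$ there is a Euclidean norm $\Phi_i^{\mathrm{eucl}}$ on $V_i$ with respect to which $f_i$ is still an isometric embedding of the Riemannian manifold $(M,g)$ into the Euclidean space $(V_i,\Phi_i^{\mathrm{eucl}})$. The key point is that the second fundamental form of $f_i$ at $p$, as defined in the affine-invariant way in the introduction (a symmetric bilinear form on $T_pM$ with values in $V_i/\operatorname{Im}d_pf_i$, well defined up to scalar once the quotient is identified with $\R$), does not depend on which ambient norm we use — it is computed purely from the Hessian of $\pi\circ f_i$ and hence is an affine invariant of the immersion. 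Therefore its type (definite, semi-definite, or degenerate) coincides with the type of the classical Euclidean second fundamental form of $f_i$ relative to $\Phi_i^{\mathrm{eucl}}$. By Gauss' Theorema Egregium, the latter type is determined by the sign of the Gauss curvature $K(p)$ of $(M,g)$: it is definite if $K(p)>0$, semi-definite (but not definite) if $K(p)=0$ with the form nonzero, and degenerate exactly when $K(p)=0$. Since $K(p)$ is an intrinsic quantity of $(M,g)$ and hence the same for both embeddings, the two second fundamental forms at $p$ have the same type.

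The main obstacle — or rather the only thing that needs a careful word — is the observation that the affine second fundamental form used in this paper has the same definiteness type as the Riemannian second fundamental form once an ambient inner product is chosen compatibly with the metric (as Proposition \ref{p:riemannian} permits). This is essentially immediate: both forms are, up to the identification $V_i/\operatorname{Im}d_pf_i\cong\R$ and up to a positive scalar coming from the choice of a unit normal, the Hessian of the same function $\pi\circ f_i$ at the critical point $p$, so they differ only by a nonzero scalar and thus share kernel and sign. Everything else is a direct appeal to the already-proved results (Theorem \ref{t:mono}, Proposition \ref{p:riemannian}) and to the classical Theorema Egregium, so no further computation is required.
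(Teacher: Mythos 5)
Your proof is correct and follows essentially the same route as the paper: the non-Riemannian case is handled by Theorem \ref{t:mono} (both forms degenerate), and the Riemannian case by Proposition \ref{p:riemannian} followed by Gauss' theorem, which ties the type of the second fundamental form to the sign of the intrinsic curvature at $p$. The extra remark that the affine-invariant second fundamental form agrees up to a nonzero scalar with the classical one once a compatible inner product is chosen is a useful clarification, but the argument is the same as the paper's.
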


\begin{proof}
If the metric of $M$ is not Riemannian then by Theorem \ref{t:mono}
both second fundamental forms are degenerate.
If the metric is Riemannian then by Proposition \ref{p:riemannian}
the norms of $V_1$ and $V_2$ can be replaced by Euclidean ones.
Then by Gauss' theorem the type of the second fundamental form
is determined by the sign of the Riemannian curvature at~$p$.
\end{proof}

\section{A non-embeddable example}
\label{sec:example}

The following example describes a class of Finsler metrics on $\R^2$
that do not admit local isometric embeddings to 3-dimensional
Banach-Minkowski spaces. The proof of non-embeddability is
given in Proposition \ref{p:nonembed}.

\begin{example}\label{ex:nonembed}
Let $\phi_0$ be a Banach-Minkowski norm on $\R^2$
such that it is not Euclidean and moreover there in no open set of $\R^2$
where the restriction of $\phi_0$ equals the restriction of a Euclidean norm.

For $\theta\in\R$, let $R^\theta$ denote the rotation of $\R^2$ by angle $\theta$:
$$
 R^\theta(\xi,\eta)=(\cos\theta\cdot\xi-\sin\theta\cdot\eta,\sin\theta\cdot\xi+\cos\theta\cdot\eta) .
$$
Define a Finsler metric $\phi$ on $\R^2$ by
$$
 \phi(x,y,\xi,\eta) = \phi_0(R^y(\xi,\eta))
$$
where $(x,y)$ and $(\xi,\eta)$ are coordinates of a point in $\R^2$
and a tangent vector in $T_{(x,y)}\R^2\cong\R^2$, resp.
\end{example}

Obviously norms $\phi_0$ satisfying the above requirements do exist.
For an explicit example, one can take
$\phi_0(\xi,\eta) = \sqrt{\xi^2+\eta^2} + \sqrt{\xi^2+2\eta^2}$
(or, in fact, any analytic formula that defines a non-Euclidean norm).

\begin{prop}\label{p:nonembed}
Let $\phi$ be as in Example \ref{ex:nonembed}.
Then no open subset of $(\R^2,\phi)$
admits a smooth isometric embedding to a 3-dimensional Banach-Minkowski space.
\end{prop}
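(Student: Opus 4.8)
The plan is to argue by contradiction and exploit the fact that the metric $\phi$ of Example~\ref{ex:nonembed} is monochromatic. Indeed, for each point $(x,y)\in\R^2$ the tangent norm $\phi(x,y,\cdot,\cdot)=\phi_0(R^y(\cdot))$ is obtained from $\phi_0$ by the linear isomorphism $R^y$, hence $(T_{(x,y)}\R^2,\phi(x,y,\cdot))$ is isometric to $(\R^2,\phi_0)$ for every $(x,y)$. Since $\phi_0$ is not Euclidean, the Finsler metric $\phi$ is monochromatic but not Riemannian. Therefore, if some open set $U\subset(\R^2,\phi)$ admitted a smooth isometric embedding $f\co U\to V$ into a $3$-dimensional Banach–Minkowski space, Theorem~\ref{t:mono} would force the second fundamental form of $f$ to be degenerate at every point of $U$.

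The heart of the argument is then to derive a contradiction from the everywhere-degeneracy of the second fundamental form, using the special structure of $\phi$ — namely that $\phi_0$ contains no Euclidean patch. I would run the same kind of differential-geometric analysis as in the proof of Theorem~\ref{t:flat}: restricting to a small coordinate neighborhood, write $H_p=\operatorname{Im}d_pf$ for the tangent plane and set up the identity $\Phi(d_xf(v))=\phi(x,v)$, then differentiate in $x$. Degeneracy of $S=S_p$ at every $p$ means $S_p$ has a one-dimensional kernel $k(p)\subset T_p U$ (assuming $S_p\ne 0$ on an open set; if $S_p\equiv 0$ on an open set then $f$ is affine there, so $\phi$ is flat near $p$, and flatness of $\phi$ would make $\phi_0$ itself — via a fixed-point argument on the holonomy — contain Euclidean directions, which one rules out directly). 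Along the kernel direction, differentiating the isometry relation in the direction $k(p)$ shows that $f$ behaves to second order like an affine map in that direction, and one gets a foliation of $U$ by curves along which $f$ is "flat". The key point: on such a flat curve the induced metric is second-order controlled, and by Lemma~\ref{l:tau} and the subsequent analysis in Section~\ref{sec:flat} (or directly via Lemma~\ref{l:Rtau}) the ruled structure of $f(U)$ forces $\Phi|_{H_p}$ to vary with $p$ in a way that — combined with monochromaticity — would pin down large families of isometric planar sections of the unit ball of $\Phi$; then Proposition~\ref{p:sections} would make $\Phi|_{H_p}$ Euclidean, hence $\phi_0$ Euclidean, a contradiction.

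I expect the cleanest route to avoid re-deriving the ruled-surface analysis. The sharper observation is: if $f\co U\to V$ is an isometric embedding and the Gauss map $G\co U\to\Gr_2(V)$, $G(p)=\operatorname{Im}d_pf$, is \emph{not} locally constant — equivalently, the second fundamental form does not vanish identically on any open set — then the image of $G$ has nonempty interior at some point (rank $\ge 1$ already gives a curve, but one needs rank $2$ for an open set; if rank is identically $\le 1$, one is in the ruled/degenerate regime). One then treats the two regimes separately. In the regime where $G$ has rank $2$ somewhere, monochromaticity gives an open set of isometric sections and Proposition~\ref{p:sections} gives a Euclidean $\phi_0$, a contradiction. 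In the regime where $G$ has rank $\le 1$ everywhere on some open set $U'$, the surface $f(U')$ is ruled; along the rulings the second fundamental form vanishes, and a direct computation (parallel to the use of $\tau$ in Section~\ref{sec:flat}, now with the extra input that the normal direction $\tau$ is constant along each ruling) shows the induced metric on a ruled surface in a Banach–Minkowski space has a flat direction at every point. But the metric $\phi$ of Example~\ref{ex:nonembed} has \emph{no} flat direction: for $\phi(x,y,\cdot)=\phi_0(R^y\cdot)$ to have a "developable" structure one would need the normed plane $(\R^2,\phi_0)$ to be flat along a family of lines, which, by the argument in Lemma~\ref{l:Rhv} (the linear vector field tangent to $\gamma$ must vanish since $\phi_0$ is not Euclidean), forces $\phi_0$ to agree with a Euclidean norm on an open set — excluded by hypothesis. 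This contradiction completes the proof.

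The main obstacle will be the second regime: organizing the local ruled-surface computation cleanly enough to extract the "no flat direction" contradiction, and in particular handling the possibility that the rank of $G$ drops on a closed set of measure zero while being $2$ nearby (which actually helps — it puts us in the first, easier regime on a dense open set). A careful statement is that the set where $G$ is locally constant is open, its complement $U''$ is open and dense in $U\setminus(\text{that set})$, on $U''$ the Gauss map is an immersion onto an open subset of $\Gr_2(V)$ (here one uses that rank $1$ of $dG$ on an open set is impossible for a strictly convex surface — the rulings would have to be asymptotic lines of a strictly convex surface, which do not exist), and then monochromaticity plus Proposition~\ref{p:sections} finishes it. Thus the proof reduces, once the rank dichotomy is set up, to invoking Theorem~\ref{t:mono} (or directly Proposition~\ref{p:sections}) together with the non-Euclidean, nowhere-locally-Euclidean hypothesis on $\phi_0$.
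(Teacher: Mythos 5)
Your first step coincides with the paper's: $\phi$ is monochromatic and non-Riemannian (since $\phi_0$ is not Euclidean), so Theorem~\ref{t:mono} forces the second fundamental form of any isometric embedding $f\co U\to V$ to be degenerate everywhere, and hence $f(U)$ is a developable, ruled surface whose tangent planes are constant along the rulings. From that point on, however, there is a genuine gap. The ``rank dichotomy'' you set up is largely vacuous: rank $2$ of the Gauss map at a point is the same thing as non-degeneracy of the second fundamental form there, which Theorem~\ref{t:mono} has already excluded, so all the content must live in your ``second regime.'' There your key claim --- that a ruled surface in a Banach--Minkowski space induces a metric with ``a flat direction at every point,'' while $\phi$ ``has no flat direction'' --- is never made precise, and the appeal to Lemma~\ref{l:Rhv} is a non sequitur: that lemma concerns the variation of isometric planar sections of the unit ball of $\Phi$, not ruled surfaces. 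Nothing in the proposal actually connects the ruled structure of $f(U)$ to the hypothesis that $\phi_0$ is nowhere locally Euclidean; the contradiction is asserted, not derived.

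The paper closes this gap with a concrete computation absent from your proposal. Each ruling is a straight segment in $(\R^3,\Phi)$, hence length-minimizing, so its preimage $\ga$ is a Finsler geodesic of $(U,\phi)$. Because the tangent plane is constant along a ruling and the isometry group of $(\R^2,\phi_0)$ is discrete, the unique continuous family of tangent-norm isometries $I_t=R^{-y(t)}$ must correspond to the identity of that fixed plane, which gives $\dot\ga(t)=R^{-y(t)}\dot\ga(0)$. The Noether integral coming from the $x$-translation invariance of $\phi$ (the first Euler--Lagrange equation) then shows that $(d_{\dot\ga(0)}\phi_0)(R^{y(t)}(e_1))$ is constant, forcing $R^{y(t)}(e_1)$, hence $y(t)$, to be constant: every ruling is horizontal. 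Finally, since every point lies on a ruling, all horizontal segments are geodesics, and the second Euler--Lagrange equation shows that $y\mapsto\phi_0(R^{y}(e_1))$ is constant, i.e.\ $\phi_0$ is constant on an arc of the Euclidean circle and therefore proportional to the Euclidean norm on an open cone --- contradicting the defining property of Example~\ref{ex:nonembed}. These steps (rulings are geodesics, the identification $\dot\ga(t)=R^{-y(t)}\dot\ga(0)$, the Noether integral, and the constancy of $\phi_0$ on an arc) are the substance of the proof and would need to be supplied.
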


\begin{proof}
Arguing by contradiction, 
let $U\subset\R^2$ be an open set, $\Phi$ is a Banach-Minkowski norm
in $\R^3$ and assume that $f\co(U,\phi)\to(\R^3,\Phi)$ 
is a smooth isometric embedding. Define $M=f(U)$.

By construction $\phi$ is monochromatic.
Hence by Theorem \ref{t:mono} the second fundamental form of $f$ is degenerate everywhere.
Therefore $f$, regarded as a surface in the Euclidean $\R^3$,
is a developable surface.
It is well-known
that every developable surface in $\R^3$ is a ruled surface,
i.e., it is the union of straight line segments, see e.g.\ \cite[\S5-8]{doCa}.
Furthermore the tangent planes to the surface are constant along every such segment.

Consider one of such segments in $M$ parametrized with a constant speed.
It can be written as $f(\ga(t))$ where $\ga=\ga(t)$, $t\in(-\ep,\ep)$, is a smooth curve in $U$.
Since the segment is a length-minimizing curve in $(\R^3,\Phi)$,
$\ga$ is a Finsler geodesic in $(U,\phi)$.
Let $\ga(t)=(x(t),y(t))$.
Since $\phi_0$ is not a Euclidean norm, its group of self-isometries is discrete.
Hence there is a unique continuous family of linear maps 
$I_t\co T_{\ga(0)} U\to T_{\ga(t)} U$ such that $I_0$ is the identity and
each $I_t$ is an isometry between the Finsler norms at $\ga(0)$ and $\ga(t)$.
In the standard coordinates this family is given by $I_t=R^{-y(t)}$.

Recall that the tangent plane $T_{f(\ga(t))}M=\operatorname{Im} d_{\ga(t)}f$ is constant
along the segment.
Due to their uniqueness,
the isometries $I_t$ correspond to the identity map of this plane, i.e.
\be\label{e:dfIt}
d_{\ga(t)}f\circ I_t = d_{\ga(0)} f
\ee
for all $t$.
Since $f\circ\ga$ is a constant-speed straight line segment,
its velocity vector $\frac d{dt}f(\ga(t))=d_{\ga(t)}f(\dot\ga(t))$
does not depend on~$t$. This and \eqref{e:dfIt} imply that
$$
  d_{\ga(t)}f(\dot\ga(t)) = d_{\ga(0)}f(\dot\ga(0)) = d_{\ga(t)}f(I_t(\dot\ga(0)))
$$
and therefore 
\be\label{e:dotga}
\dot\ga(t) = I_t(\dot\ga(0)) = R^{-y(t)}(\dot\ga(0)). 
\ee

The Euler-Lagrange equations for the geodesic $\ga$ are
\be\label{e:euler}
\begin{cases}
  \frac d{dt} \frac{\pd\phi}{\pd\xi} (\ga(t),\dot\ga(t)) = \frac{\pd\phi}{\pd x} (\ga(t),\dot\ga(t)) , \\
  \frac d{dt} \frac{\pd\phi}{\pd\eta} (\ga(t),\dot\ga(t)) = \frac{\pd\phi}{\pd y} (\ga(t),\dot\ga(t)) .
\end{cases}
\ee
where $\phi$ is regarded as a function of variables $x,y,\xi,\eta$ as in Example \ref{ex:nonembed}.
Since $\phi$ is preserved by translations along the $x$-axis, the first equation in \eqref{e:euler}
imply that $\frac{\pd\phi}{\pd\xi} (\ga(t),\dot\ga(t))$ is constant.
(One can also see this constant as the Noether integral associated 
with the group of horizontal translations.)
Observe that
$$
\frac{\pd\phi}{\pd\xi} (\ga(t),\dot\ga(t))
= d_{\dot\ga(t)}(\phi_0\circ R^{y(t)}) (e_1)
= (d_{\dot\ga(0)}\phi_0) (R^{y(t)}(e_1))
$$
due to \eqref{e:dotga}, where $e_1$ is the first coordinate vector of $\R^2$.
Since $d_{\dot\ga(0)}\phi_0\co\R^2\to\R$ is a nontrivial linear map
and $R^{y(t)}(e_1)$ belongs to the Euclidean unit circle, it follows
that $R^{y(t)}(e_1)$ is constant. Hence $y(t)$ is constant.

Thus $\ga$ is a (constant-speed parametrized) horizontal segment in $U\subset\R^2$.
Since $M$ contains a segment through every point, it follows that
all horizontal segments in $U$ are geodesics of~$\phi$.
Then the second equation in \eqref{e:euler} implies that
$$
 \frac{\pd\phi}{\pd y}(x,y,0,1) = 0
$$
for all $(x,y)\in U$.
Since $\phi$ does not depend on $x$, it follows that 
the value 
$$
\phi(x,y,0,1) = \phi_0(R^y(e_1)), \qquad (x,y)\in U,
$$
is constant. Thus $\phi_0$ is constant on an arc of the Euclidean unit circle
and hence it is proportional to the standard Euclidean norm on an open subset of~$\R^2$.
This contradicts the requirements of Example \ref{ex:nonembed}.
\end{proof}

\end{document}